\def\polk#1{\setbox0=\hbox{#1}{\ooalign{\hidewidth\lower1.5ex\hbox{`}\hidewidth\crcr\unhbox0}}}
\newtheorem{Theorem}{Theorem}
\newtheorem{Definition}{Definition}
\newtheorem{Lemma}{Lemma}
\newtheorem{Corollary}{Corollary}
\newtheorem{Proposition}{Proposition}
\theoremstyle{definition}
\theoremstyle{remark}
\newtheorem{Remark}{Remark}
\newcommand{\intav}[1]{\mathchoice {\mathop{\vrule width 6pt height 3 pt depth -2.5pt\kern -8pt \intop}\nolimits_{\kern -6pt#1}} {\mathop{\vrule width5pt height 3 pt depth -2.6pt \kern -6pt \intop}\nolimits_{#1}}{\mathop{\vrule width 5pt height 3 pt depth -2.6pt \kern -6pt\intop}\nolimits_{#1}} {\mathop{\vrule width 5pt height 3 pt depth-2.6pt \kern -6pt \intop}\nolimits_{#1}}}
\numberwithin{equation}{section}
\title[A degenerate transmission problem]{BMO--regularity for a degenerate transmission problem}
\author[V. Bianca]{Vincenzo Bianca}
\address{University of Coimbra, CMUC, Department of Mathematics, 3001-501 Coimbra, Portugal}{}
\email{vincenzo@mat.uc.pt}
\author[E. A. Pimentel]{Edgard A. Pimentel}
\address{University of Coimbra, CMUC, Department of Mathematics, 3001-501 Coimbra, Portugal and Pontifical Catholic University of Rio de Janeiro -- PUC-Rio, 22451-900 G\'avea, Rio de Janeiro-RJ, Brazil}{}
\email{edgard.pimentel@mat.uc.pt}
\author[J.M.~Urbano]{Jos\'{e} Miguel Urbano}
\address{King Abdullah University of Science and Technology (KAUST), Computer, Electrical and Mathematical Sciences and Engineering Division (CEMSE), Thuwal 23955-6900, Saudi Arabia and University of Coimbra, CMUC, Department of Mathematics, 3001-501 Coimbra, Portugal}{} 
\email{miguel.urbano@kaust.edu.sa} 
\date{\today}
\begin{document}

\begin{abstract}
We examine a transmission problem driven by a degenerate quasilinear operator with a natural interface condition. Two aspects of the problem entail genuine difficulties in the analysis: the absence of representation formulas for the operator and the degenerate nature of the diffusion process. Our arguments circumvent these difficulties and lead to new regularity estimates. For bounded interface data, we prove the local boundedness of weak solutions and establish an estimate for their gradient in ${\rm BMO}-$spaces. The latter implies solutions are of class $C^{0,{\rm Log-Lip}}$ across the interface. Relaxing the assumptions on the data, we establish local H\"older continuity for the solutions.
\end{abstract}

\keywords{Transmission problems; $p-$Laplace operator; local boundedness; BMO gradient estimates; Log-Lipschitz regularity.}

\subjclass{35B65; 35J92; 35Q74.}

\maketitle

\section{Introduction}\label{sec_mollybloom}

Transmission problems describe diffusive processes within heterogeneous media that change abruptly across certain interfaces. They find application, for example, in the study of electromagnetic conductivity and composite materials, and their mathematical formulation involves a domain split into sub-regions, where partial differential equations (PDEs) are prescribed. Since the PDEs vary from region to region, the problem may have discontinuities across the interfaces. 
Consequently, the geometry of these interfaces (which, in contrast to free boundary problems, are fixed and given a priori) and the structure of the underlying equations play a crucial role in analysing transmission problems.

This class of problems first appeared circa 1950, in the work of Mauro Picone \cite{Picone1954}, as an attempt to address heterogeneous materials in elasticity theory. Several subsequent works developed the basics of the theory and generalised it in various directions \cite{Borsuk1968, Campanato1957, Campanato1959, Campanato1959a, Iliin-Shismarev1961, Schechter1960, Sheftel1963, Stampacchia1956}. We refer the interested reader to \cite{Borsuk2010} for a comprehensive account of this literature.

Developments concerning the regularity of the solutions to transmission problems are much more recent. In \cite{Li-Vogelius2000}, the authors study a class of elliptic equations in divergence form, with discontinuous coefficients, modelling composite materials with closely spaced interfacial boundaries, such as fibre-reinforced structures. The main result in that paper is the local H\"older continuity for the gradient of the solutions, with estimates. The findings in \cite{Li-Vogelius2000} are relevant from the applied perspective since the gradient of a solution accounts for the stress of the material, and estimating it shows the stresses remain uniformly bounded, even when fibres are arbitrarily close to each other. The vectorial counterpart of the results in \cite{Li-Vogelius2000} appeared in \cite{Li-Nirenberg2003}, where regularity estimates for higher-order derivatives of the solutions are obtained. See also the developments reported in \cite{Bonnetier2000}.

A further layer of analysis concerns the proximity of sub-regions in limiting scenarios. In \cite{Bao-Li-Yin1}, the authors examine a domain containing two subregions, which are $\varepsilon-$apart, for some $\varepsilon>0$. Within each sub-region, the diffusion process is given by a divergence-form equation with a diffusivity coefficient $A \neq 1$. In the remainder of the domain, the diffusivity is also constant but equal to $1$. By setting $A=+\infty$, the authors examine the case of perfect conductivity. The remarkable fact about this model is that estimates on the gradient of the solutions deteriorate as the two regions approach each other. In \cite{Bao-Li-Yin1}, the authors obtain blow-up rates for the gradient norm in terms of $\varepsilon\to 0$. We also notice the findings reported in \cite{Bao-Li-Yin2} extend those results to the context of multiple inclusions and also treat the case of perfect insulation $A=0$. We also refer the reader to \cite{Briane}.

More recently, the analysis of transmission problems focused on the geometry of the interface. The minimum requirements on the transmission interface yielding regularity properties for the solutions are particularly interesting. In \cite{CSCS2021}, the authors consider a domain split into two sub-regions. Inside each sub-region, the problem's solution is required to be a harmonic function, and a flux condition is prescribed along the interface separating the sub-regions. By resorting to a representation formula for harmonic functions, the authors establish the existence of solutions to the problem and prove that solutions are of class $C^{0,{\rm Log-Lip}}$ across the interface. In addition, under the assumption that the interface is locally of class $C^{1,\alpha}$, they prove the solutions are of class $C^{1,\alpha}$ within each sub-region, \emph{up to the transmission interface}. This fact follows from a new stability result allowing the argument to import information from the case of flat interfaces. In \cite{SCS2022}, the authors extend the analysis in \cite{CSCS2021} to the context of fully nonlinear elliptic operators. Under the assumption that the interface is of class $C^{1,\alpha}$, they prove that solutions are of class $C^{0,\alpha}$ across, and $C^{1,\alpha}$ up to the interface. Furthermore, if the interface is of class $C^{2,\alpha}$, then solutions became $C^{2,\alpha}-$regular, also up to the interface. The findings in \cite{SCS2022} rely on a new Aleksandrov-Bakelman-Pucci estimate and variants of the maximum principle and the Harnack inequality. We also notice the developments reported in \cite{Borsuk2019}. In that paper, the author proves local boundedness in a neighbourhood of boundary points for a transmission problem driven by a $p-$Laplacian type operator.

Our gist in this paper is to extend the results of \cite{CSCS2021} to the case of degenerate quasilinear equations, which are ``\textit{the natural, and, in a sense, the best generalisation of the $p-$Laplace equation}" (cf. \cite{Lieberman_1991}), namely
$$ \textnormal{div}\left(\frac{g\left(| D  u|\right)}{| D  u|} D  u\right)=0,$$
where $g$ is a nonlinearity satisfying appropriate assumptions. We first prove that weak solutions to the transmission problem, properly defined and whose existence follows from well-known methods, are locally bounded. The proof combines delicate inequalities with the careful choice of auxiliary test functions and a cut-off argument to produce a variant of the weak Harnack inequality. Working under a $C^1$ interface geometry, we then obtain an integral estimate for the gradient, leading to regularity in ${\rm BMO}-$spaces. As a corollary, we infer that solutions are of class $C^{0,{\rm Log-Lip}}$ across the fixed interface. For the particular case of the $p-$Laplace operator, this result follows directly from potential estimates obtained in \cite{DM2011, KM2014c}; we also refer to \cite{M2011,M2011a}. Finally, we relax the boundedness assumption on the interface data and derive local H\"older continuity estimates.

This transmission problem driven by a quasilinear degenerate operator presents genuine difficulties compared to the Laplacian's linear case. Firstly, the operator lacks representation formulas, and the strategy developed in \cite{CSCS2021} is no longer available. Secondly, the degenerate nature of the problem rules out the approach put forward in \cite{SCS2022}. Consequently, one must develop new machinery to examine the regularity of the solutions.

Another fundamental question in transmission problems concerns the optimal regularity \emph{up to the interface}. As mentioned before, results of this type appear in the recent works \cite{CSCS2021} and \cite{SCS2022}; see also \cite{dong20}. The issue remains open in the context of quasilinear degenerate problems, particularly for the $p-$Laplace operator. We believe the analysis of the boundary behaviour of $p-$harmonic functions may yield helpful information in this direction. 

The remainder of this article is organised as follows. Section \ref{sec_vicosa} contains the precise formulation of the problem, comments on the existence of a unique solution and gathers basic material used in the paper. In Section \ref{sec_alkhawarizmi}, we put forward the proof of the local boundedness. The proof of the BMO--regularity and its consequences is the object of Section \ref{sec_beacon}, where further generalisations are also included.

\section{Setting of the problem and auxiliary results}\label{sec_vicosa}

In this section, we precisely state our transmission problem, introduce the notion of a weak solution and comment on its existence and uniqueness. We then collect several auxiliary results.

\subsection{Problem setting and assumptions}
Let $\Omega\subset\mathbb{R}^d$ be a bounded domain and fix $\Omega_1\Subset\Omega$. Define $\Omega_2:=\Omega\setminus\overline{\Omega_1}$ and consider the interface $\Gamma:=\partial\Omega_1$, which we assume is a $(d-1)-$surface of class $C^1$. For a function $u:\overline{\Omega}\to\mathbb{R}$, we set 
\begin{equation*}
u_1:=u\big|_{\overline{\Omega_1}}\hspace{.3in} \mbox{and}\hspace{.3in} u_2:=u\big|_{\overline{\Omega_2}}.
\end{equation*}
Note that we necessarily have $u_1=u_2$ on $\Gamma$. Denoting with $\nu$  the unit normal vector to $\Gamma$ pointing inwards to $\Omega_1$, we write 
$$\frac{\partial u_i}{\partial\nu} = Du_i \cdot \nu, \quad i=1,2.$$
For a nonlinearity $g$, satisfying appropriate assumptions, we consider the quasilinear degenerate transmission problem consisting of finding a function $u:\overline{\Omega}\to\mathbb{R}$ such that
\begin{equation}\label{eq_stima118}
\begin{cases}
		\textnormal{div}\left(\frac{g\left(| D  u_1|\right)}{| D  u_1|} D  u_1\right)=0&\hspace{.2in}\mbox{in}\hspace{.2in}\Omega_1\\
		\vspace*{-0.3cm}\\
		\textnormal{div}\left(\frac{g\left(| D  u_2|\right)}{| D  u_2|} D  u_2\right)=0&\hspace{.2in}\mbox{in}\hspace{.2in}\Omega_2,\\
\end{cases}
\end{equation}
with the additional conditions
\begin{equation}\label{eq_stima119}
\begin{cases}
u=0&\hspace{.2in}\mbox{on}\hspace{.2in}\partial\Omega\\
\vspace*{-0.3cm}\\
\frac{g(| D  u_1|)}{| D  u_1|}\frac{\partial u_1}{\partial\nu}-\frac{g(| D  u_2|)}{| D  u_2|}\frac{\partial u_2}{\partial\nu}=f&\hspace{.2in}\mbox{on}\hspace{.2in}\Gamma,
\end{cases}
\end{equation}
for a given function $f$.


We assume the function $g\in C^1\left(\mathbb{R}^+_0\right)$ is such that
	\begin{equation}
		g_0\le\frac{tg'(t)}{g(t)}\le g_1,\quad\forall t>0,
	\label{business class}
	\end{equation}
	for fixed constants $1\le g_0\le g_1$. Moreover, we assume the monotonicity inequality 
	\begin{equation}\label{monicavitti}
		\bigg(\frac{g(|\xi|)}{|\xi|}\xi-\frac{g(|\zeta|)}{|\zeta|}\zeta\bigg)\cdot(\xi-\zeta)\ge C|\xi-\zeta|^p, \quad\forall\xi,\zeta\in\mathbb{R}^d,
	\end{equation}
holds for a certain $p>2$ and $C>0$. 
	
	By choosing $g(t)=t^{p-1}$, with $p>2$, one gets in \eqref{eq_stima118} two degenerate $p-$Laplace equations. A different example of a nonlinearity $g=g(t)$ satisfying \eqref{business class}-\eqref{monicavitti} is
	\[
		g(t):=t^{p-1}\ln\left(a+t\right)^\alpha,
	\]
	for $p>2$, $a>1$ and $\alpha>0$. 
	
We now define the primitive of $g$, 
	\begin{equation*}
		G(t)=\int_0^tg(s)\,{\rm d}s, \quad t \geq 0.
	\end{equation*}
Due to the assumptions on $g$, one concludes that $G:\mathbb{R}\to\mathbb{R}\cup\left\lbrace+\infty\right\rbrace$ is left-continuous and convex, or a \emph{Young function} (see \cite[Definition 3.2.1]{at1}).  Before proceeding, we introduce the Orlicz-Sobolev space defined by $G$.

\begin{Definition}[Orlicz-Sobolev space]\label{notto}
Let $G$ be a Young function. We define the Orlicz-Sobolev space $W^{1,G}(\Omega)$ as the set of weakly differentiable functions $u\in W^{1,1}(\Omega)$ such that 
\[
	\int_\Omega G\left(|u(x)|\right){\rm d}x+\int_\Omega G\left(|Du(x)|\right){\rm d}x<\infty.
\]
The space $W^{1,G}_0(\Omega)$ is the closure of $C^{\infty}_c(\Omega)$ in $W^{1,G}(\Omega)$. 
\end{Definition}

\subsection{Weak solutions}
The precise definition of solution we have in mind is the object of the following definition.

\begin{Definition}\label{def_weaksol}
A function $u\in W_0^{1,G}(\Omega)$ is a weak solution of \eqref{eq_stima118}-\eqref{eq_stima119} if 
\begin{equation}
\int_{\Omega}\frac{g\left(| D  u|\right)}{| D  u|} D  u\cdot D  v\,{\rm d}x=\int_{\Gamma}f v\,{\rm d}\mathcal{H}^{d-1},\hspace{.2in}\forall\,v\in W^{1,G}_0(\Omega).
\label{portia}
\end{equation}
\end{Definition}

We use the Hausdorff measure $\mathcal{H}^{d-1}$ in the surface integral to emphasise the operator acting on the solution is a measure supported along the interface, and we write
\begin{equation}\label{eq_pde}
	-\textnormal{div}\left(\frac{g\left(| D  u|\right)}{| D  u|} D  u\right)=f\,{\rm d}\mathcal{H}^{d-1}\big|_{\Gamma}.
\end{equation}

To justify \eqref{eq_pde}, we multiply both equations in \eqref{eq_stima118} by a test function $\varphi\in C^\infty_c(\Omega)$, and formally integrate by parts to get
\[
\int_{\Omega_1}\frac{g\left(| D  u_1|\right)}{| D  u_1|} D  u_1\cdot D\varphi\,{\rm d}x=-\int_{\Gamma}\left(\frac{g\left(| D  u_1|\right)}{| D  u_1|} D  u_1\cdot \nu\right)\varphi\,{\rm d}\mathcal{H}^{d-1}
\]
and
\[
\int_{\Omega_2}\frac{g\left(| D  u_2|\right)}{| D  u_2|} D  u_2\cdot D\varphi\,{\rm d}x=-\int_{\Gamma}\left(\frac{g\left(| D  u_2|\right)}{| D  u_2|} D  u_2\cdot \nu\right)\varphi\,{\rm d}\mathcal{H}^{d-1}
\]
Adding and using \eqref{eq_stima119}, we obtain
\[
\int_{\Omega}\frac{g\left(| D  u|\right)}{| D  u|} D  u\cdot D  \varphi\,{\rm d}x=\int_{\Gamma}f \varphi\,{\rm d}\mathcal{H}^{d-1},\hspace{.2in}\forall\,\varphi\in W^{1,G}(\Omega).
\]

\begin{Remark}
We notice the integrals in Definition \ref{def_weaksol} are well-defined. Indeed, let $u,v\in W^{1,G}(\Omega)$; we verify that
\begin{equation*}
	\int_{\Omega}\frac{g(|Du|)}{|Du|}Du\cdot Dv\,{\rm d}x<\infty.
\end{equation*}
Since $g$ is increasing, we have $tg(t)\le CG(t)$ for $t\ge0$. Also, $G(t+s)\le C\big(G(t)+G(s)\big)$ for $t,s\ge0$. Hence,
\begin{align*}
\bigg|\int_{\Omega}\frac{g(|Du|)}{|Du|}Du\cdot Dv\,{\rm d}x\bigg|\le&\int_{\Omega}g(|Du|)|Dv|\,{\rm d}x\\
\le&\int_{\Omega}g(|Du|+|Dv|)(|Du|+|Dv|)\,{\rm d}x\\
\le&C\int_{\Omega}G(|Du|+|Dv|)\,{\rm d}x\\
\le&C\int_{\Omega}G(|Du|)\,{\rm d}x+C\int_{\Omega}G(|Dv|)\,{\rm d}x\\
<&\infty.
\end{align*}
\end{Remark}

\begin{Remark}
Let $u\in W_0^{1,G}(\Omega)$, and suppose that \eqref{monicavitti} is in force. Then one infers $u\in W_0^{1,p}(\Omega)$. Indeed, that inequality yields
\begin{align*}
\int_\Omega|Du|^p\,{\rm d}x\le&C\int_\Omega g(|Du|)|Du|\,{\rm d}x\\
\le&C\int_\Omega G(|Du|)\,{\rm d}x.
\end{align*}
\end{Remark}

\subsection{Existence and uniqueness of weak solutions}
To prove the existence of a unique weak solution to \eqref{eq_stima118}-\eqref{eq_stima119}, one can resort to approximation and monotonicity methods. We refer the reader to \cite{baroni2015}; see also \cite{Lieberman_1991}. Additionally, we remark that the weak solution is the global minimiser of the functional $I: W_0^{1,G}(\Omega)\to\mathbb{R}$ defined by
\begin{equation}\label{stima117}
 I(u)=\int_{\Omega}G\left(| D u|\right)\,{\rm d}x-\int_{\Gamma}fu\,{\rm d}\mathcal{H}^{d-1},
\end{equation}
whose Euler-Lagrange equation, in its weak formulation, is precisely \eqref{portia}.

\subsection{Auxiliary results}\label{subsec_prelim} We now collect some auxiliary material which will be instrumental in the proofs of the main results. We start with a technical inequality (c.f. \cite[Lemma 2]{Serrin_1964}).

\begin{Lemma}\label{lemma numerico}
Let $p>0$, and $N\in\mathbb{N}$. Let also $a_1,\dots,a_N,q_1,\dots,q_N$ be real numbers such that $0<a_i<\infty$ and $0\le q_i<p$, for every $i=1,\ldots,N$. Suppose that $z,z_1,\ldots,z_N$ are positive real numbers satisfying
\begin{equation*}
z^p\le\sum_{i=1}^Na_iz_i^{q_i}.
\end{equation*}
Then there exists $C>0$ such that
\begin{equation*}
z\le C\sum_{i=1}^Na_i^{\gamma_i}
\end{equation*}
where $\gamma_i=(p-q_i)^{-1}$, for $i=1,\ldots,N$. Finally, $C=C(N,p,q_1,\ldots,q_N)$.
\end{Lemma}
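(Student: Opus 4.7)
The plan is a one-shot pigeonhole argument on the $N$ summands. Reading the hypothesis as $z^p \le \sum_{i=1}^N a_i z^{q_i}$ (which is the classical Serrin-type inequality and the only reading consistent with the stated conclusion, since the $z_i$ would otherwise be unconstrained), I first observe that the right-hand side is a sum of $N$ positive quantities, so at least one index $i_0 \in \{1, \ldots, N\}$ must satisfy
$$z^p \le N\, a_{i_0}\, z^{q_{i_0}}.$$
Because $q_{i_0} < p$ by hypothesis, the exponent $p - q_{i_0}$ is strictly positive; dividing by $z^{q_{i_0}}$ (legitimate since $z>0$) and taking the $(p-q_{i_0})$-th root yields
$$z \le (N a_{i_0})^{\gamma_{i_0}} = N^{\gamma_{i_0}}\, a_{i_0}^{\gamma_{i_0}}.$$

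To eliminate the $i_0$-dependence, I would set
$$C := \max_{1 \le i \le N} N^{\gamma_i},$$
which depends only on $N$, $p$ and the $q_i$, and then use the non-negativity of every $a_j^{\gamma_j}$ to bound the single summand $a_{i_0}^{\gamma_{i_0}}$ above by the entire sum $\sum_{j=1}^N a_j^{\gamma_j}$. Combining these two observations gives
$$z \le C \sum_{j=1}^N a_j^{\gamma_j},$$
which is exactly the desired estimate with $C = C(N, p, q_1, \ldots, q_N)$.

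I do not anticipate any substantive obstacle. The proof rests only on pigeonhole—extracting a single dominant summand—and on the strict inequality $q_i < p$, which guarantees every reciprocal exponent $\gamma_i$ is positive and finite so that the inversion step is legitimate. The only point meriting a line of explanation is that the dominating index $i_0$ depends on the particular configuration of $(z, a_1, \ldots, a_N)$; this is precisely why the final bound must be stated as a sum over all indices rather than as a bound by a single term.
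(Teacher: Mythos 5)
Your proof is correct. The paper itself offers no proof for this lemma---it simply cites \cite[Lemma 2]{Serrin_1964}---so your argument fills a gap rather than competing with one. Your reading of the hypothesis is also the right one: as written, with $z_i$ unconstrained, the statement would be false (take $N=1$, $p=2$, $q_1=1$, $a_1=1$ and $z_1$ arbitrarily large); either the $z_i$ should all be $z$, or one should add the hypothesis $z_i\le z$ together with $q_i\ge 0$, which, when substituted into the right-hand side, collapses to exactly the inequality you analyse. This reading is confirmed by the sole application in the paper (Step~3 of Theorem~\ref{thm_lb}, inequality $z^p\le pz^{p-1}+Cq^{p-1}\zeta^{p-1}+Cq^{p-1}\zeta^{p-1}z$), where the variables raised to the powers $q_i$ are all $z$ itself. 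Your pigeonhole argument---extract the dominant summand $i_0$, cancel $z^{q_{i_0}}$, take the $(p-q_{i_0})$-th root, and bound $a_{i_0}^{\gamma_{i_0}}$ by the full sum, with $C=\max_i N^{\gamma_i}$---is elementary, complete, and works for all $p>0$; it is worth noting that the alternative route via Young's inequality (absorbing $\tfrac12 z^p$ on the left and summing) only delivers the stated conclusion cleanly when $p\ge1$, so your approach is in fact the more robust of the two.
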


Although standard in the field, the following result lacks detailed proof in the literature. We include it here for completeness and future reference.

\begin{Lemma}\label{lem_stima119}
Fix $R_0>0$ and let $\phi:[0,R_0]\to[0,\infty)$ be a non-decreasing function. Suppose there exist constants $C_1,\alpha,\beta>0$, and $C_2,\mu\ge0$, with $\beta<\alpha$, satisfying
\begin{equation*}
\phi(r)\le C_1\Big[\Big(\frac{r}{R}\Big)^{\alpha}+\mu\Big]\phi(R)+C_2R^{\beta},
\end{equation*}
for every $0<r\le R\le R_0$.
Then, for every $\sigma\le\beta$, there exists $\mu_0=\mu_0(C_1,\alpha,\beta,\sigma)$ such that, if $\mu<\mu_0$, for every $0<r\le R\le R_0$, we have
\begin{equation*}
\phi(r)\le C_3\Big(\frac{r}{R}\Big)^{\sigma}\big(\phi(R)+C_2R^{\sigma}\big),
\end{equation*}
where $C_3=C_3(C_1,\alpha,\beta,\sigma)>0$. Moreover,
\begin{equation*}
\phi(r)\le C_4r^{\sigma},
\end{equation*}
where $C_4=C_4(C_2,C_3,R_0,\phi(R_0),\sigma)$.
\end{Lemma}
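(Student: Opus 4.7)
The result is a classical Campanato/Giaquinta-type iteration organised around a geometric sequence of radii. The plan is to choose $\tau \in (0,1)$ so small that $C_1 \tau^\alpha \le \tfrac{1}{2}\tau^\sigma$ (possible because $\sigma \le \beta < \alpha$), and then set $\mu_0 := \tau^\sigma/(2C_1)$ so that, for any $\mu < \mu_0$, applying the hypothesis with $r = \tau R$ yields
\[
\phi(\tau R) \le C_1(\tau^\alpha + \mu)\phi(R) + C_2 R^\beta \le \tau^\sigma \phi(R) + C_2 R^\beta.
\]
Decoupling the two contributions $C_1 \tau^\alpha$ and $C_1 \mu$ so that both are absorbed into the single target factor $\tau^\sigma$ is the main technical step.

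I would then iterate this along the geometric sequence $R_k := \tau^k R$. A straightforward induction gives
\[
\phi(R_k) \le \tau^{k\sigma}\phi(R) + C_2 R^\beta \sum_{j=0}^{k-1} \tau^{(k-1-j)\sigma}\tau^{j\beta}.
\]
Factoring out $\tau^{k\sigma}$, the remaining geometric sum $\sum_{j=0}^{k-1} \tau^{j(\beta-\sigma)}$ is uniformly bounded in $k$ by a constant depending only on $\tau,\sigma,\beta$ whenever $\sigma < \beta$ (the boundary case $\sigma = \beta$ can be dispatched by replacing $\sigma$ by any $\sigma' \in (\sigma,\beta)$ and using $r \le R_0$). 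This yields $\phi(R_k) \le C\,\tau^{k\sigma}\bigl(\phi(R) + C_2 R^\beta\bigr)$.

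The passage from discrete to continuous radii is standard: given $r \in (0,R]$, pick the unique $k \ge 0$ with $\tau^{k+1} R < r \le \tau^k R$; monotonicity of $\phi$ gives $\phi(r) \le \phi(\tau^k R)$, and the bound $\tau^{k\sigma} \le \tau^{-\sigma}(r/R)^\sigma$ converts the discrete estimate into
\[
\phi(r) \le C_3 \left(\frac{r}{R}\right)^\sigma \bigl(\phi(R) + C_2 R^\sigma\bigr),
\]
after absorbing a factor $R^{\beta-\sigma} \le R_0^{\beta-\sigma}$ into $C_3$ when needed. The final claim $\phi(r) \le C_4 r^\sigma$ follows by specialising to $R = R_0$, with $C_4 = C_3 R_0^{-\sigma}\bigl(\phi(R_0) + C_2 R_0^\sigma\bigr)$, giving exactly the announced dependence.

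The only delicate point is the initial calibration of $\tau$ and $\mu_0$ so that the two ``bad'' terms $C_1\tau^\alpha$ and $C_1\mu$ are jointly controlled by $\tau^\sigma$; everything else amounts to bookkeeping of a geometric series and the monotonicity-based transfer from integer radii to continuous ones.
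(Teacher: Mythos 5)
Your calibration step and iteration along the geometric radii $\tau^k R$ are essentially the same device the paper uses, and your passage from discrete to continuous radii is correct. However, there is a genuine gap at the borderline case $\sigma = \beta$, which happens to be exactly the case that the paper uses in its applications (Theorems~\ref{thm_ll} and \ref{thm_c0alpha} both invoke the lemma with $\sigma = \beta$).

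The problem is the choice of contraction exponent. You calibrate $\tau$ so that $C_1\tau^\alpha + C_1\mu \le \tau^\sigma$, and then the iterated bound reads
\[
\phi(\tau^k R) \le \tau^{k\sigma}\phi(R) + C_2 R^\beta \tau^{(k-1)\sigma}\sum_{j=0}^{k-1}\tau^{j(\beta-\sigma)}.
\]
When $\sigma < \beta$ the sum converges, as you say. But when $\sigma = \beta$ it equals $k$ and is unbounded, and your parenthetical fix --- ``replacing $\sigma$ by any $\sigma' \in (\sigma,\beta)$'' --- cannot be carried out, since that interval is empty at $\sigma = \beta$. Moreover, replacing $\sigma$ by any strictly smaller exponent $\sigma'$ would only give the \emph{weaker} conclusion with $(r/R)^{\sigma'}$, which does not imply the target estimate with $(r/R)^{\beta}$, so the implication runs in the wrong direction. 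The paper sidesteps this by introducing an intermediate exponent $\delta$ with $\beta < \delta < \alpha$ and calibrating the one-step contraction to be $\theta^\delta$, \emph{not} $\theta^\sigma$; then the geometric sum becomes $\sum_j \theta^{j(\delta - \beta)}$, which converges because $\delta > \beta$, and the stronger decay $\theta^{k\delta} \le \theta^{k\beta}$ is downgraded to $\theta^{k\beta}$ only at the end. You should make the same move: choose $\delta \in (\beta,\alpha)$ and arrange $C_1\tau^\alpha + C_1\mu_0 \le \tau^\delta$ (still possible since $\delta < \alpha$), rather than tying the contraction exponent directly to $\sigma$. With that correction, the rest of your argument goes through as written.
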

\begin{proof}
For clarity, we split the proof into two steps. First, an induction argument leads to an inequality at discrete scales; then, we pass to the continuous case and conclude the argument.  

\medskip

\noindent{\bf Step 1 -} We want to verify that 
\begin{equation}\label{eq_1}
\phi(\theta^{n+1}R)\le\theta^{(n+1)\delta}\phi(R)+C_2\theta^{n\beta}R^{\beta}\sum_{j=0}^n\theta^{j(\delta-\beta)},
\end{equation}
for every $n\in\mathbb{N}$. We notice it suffices to prove the estimate for $\sigma=\beta$ and work in this setting. For $0<\theta<1$ and $0<R\le R_0$ the assumption of the lemma yields
\[
\phi(\theta R)\le C_1\bigg[\bigg(\frac{\theta R}{R}\bigg)^{\alpha}+\mu\bigg]\phi(R)+C_2R^{\beta}=\theta^{\alpha}C_1(1+\mu\theta^{-\alpha})\phi(R)+C_2R^{\beta}.
\]
Choose $\theta\in(0,1)$ such that $2C_1\theta^{\alpha}=\theta^{\delta}$ with $\beta<\delta<\alpha$. Notice that $\theta$ depends only on $C_1,\alpha,\delta$. Take $\mu_0>0$ such that $\mu_0\theta^{-\alpha}<1$. For every $R\le R_0$ we then have
\begin{equation}\label{eq_induction00}
\phi(\theta R)\le\theta^{\delta}\phi(R)+C_2R^{\beta}
\end{equation}
and the base case follows. Suppose the statement has already been verified for some $k\in\mathbb{N}$, $k\ge2$; then
\begin{equation*}
\phi(\theta^kR)\le\theta^{k\delta}\phi(R)+C_2\theta^{(k-1)\beta}R^{\beta}\sum_{j=0}^{k-1}\theta^{j(\delta-\beta)}.
\end{equation*}
Thanks to \eqref{eq_induction00}, we have
\[
\begin{split}
\phi(\theta^{k+1}R)&=\phi\big(\theta^k(\theta R)\big)\le\theta^{k\delta}\phi(\theta R)+C_2\theta^{(k-1)\beta}(\theta R)^{\beta}\sum_{j=0}^{k-1}\theta^{j(\delta-\beta)}\\
&\le\theta^{k\delta}\big[\theta^{\delta}\phi(R)+C_2R^{\beta}\big]+C_2\theta^{k\beta}R^{\beta}\sum_{j=0}^{k-1}\theta^{j(\delta-\beta)}\\
&=\theta^{(k+1)\delta}\phi(R)+C_2\theta^{k\delta}R^{\beta}+C_2\theta^{k\beta}R^{\beta}\sum_{j=0}^{k-1}\theta^{j(\delta-\beta)}\\
&=\theta^{(k+1)\delta}\phi(R)+C_2\theta^{k\beta}R^{\beta}\sum_{j=0}^k\theta^{j(\delta-\beta)}.
\end{split}
\]
Hence, \eqref{eq_1} holds for every $k\in\mathbb{N}$, and the induction argument is complete.

\medskip

\noindent{\bf Step 2 -} Next, we pass from the discrete to the continuous case. In particular, we claim that
\begin{equation*}
\phi(r)\le C_3\Big(\frac{r}{R}\Big)^{\beta}\big(\phi(R)+C_2R^{\beta}\big),
\end{equation*}
for every $0<r\le R\le R_0$.

Indeed,
\begin{align*}
\phi(\theta^{k+1}R) \le&\theta^{(k+1)\delta}\phi(R)+C_2\theta^{k\beta}R^{\beta}\frac{1}{1-\theta^{\delta-\beta}}\\
=&\theta^{(k+1)\delta}\phi(R)+C_2R^{\beta}\frac{\theta^{(k+1)\beta}}{\theta^{\beta}-\theta^{\delta}}\\
\le&C_3\theta^{(k+1)\beta}\big(\phi(R)+C_2R^{\beta}\big),
\end{align*}
for every $k\in\mathbb{N}$. Taking $k\in\mathbb{N}$ such that $\theta^{k+2}R\le r<\theta^{k+1}R$, up to relabeling the constant $C_3$, we get
\begin{align*}
\phi(r)\le&\phi(\theta^{k+1}R)\le C_3\theta^{(k+1)\beta}\big(\phi(R)+C_2R^{\beta}\big)\\
=&C_3\theta^{(k+2)\beta}\theta^{-\beta}\big(\phi(R)+C_2R^{\beta}\big)\\
\le&C_3\Big(\frac{r}{R}\Big)^{\beta}\big(\phi(R)+C_2R^{\beta}\big).
\end{align*}
Finally, one notices 
\[
\phi(r)\le C_3\frac{1}{R_0^{\beta}}\big(\phi(R_0)+C_2R_0^{\beta}\big)r^{\beta}=:C_4r^{\beta},
\]
and the proof is complete.
\end{proof}

We conclude this section by introducing two functional spaces we resort to in the paper, namely Campanato and Morrey spaces. Indeed, we use embedding properties of these spaces to conclude the H\"older-continuity of weak solutions when the interface data is unbounded.

\begin{Definition}[Campanato spaces]
We denote by $L_C^{p,\lambda}(\Omega;\mathbb{R}^d)$, with $1\le p<\infty$ and $\lambda\ge0$, the space of functions $u\in L^p(\Omega;\mathbb{R}^d)$ such that
\begin{equation*}
[u]_{L_C^{p,\lambda}(\Omega;\mathbb{R}^d)}^p=\sup_{x^0\in\Omega,\rho>0}\frac{1}{\rho^{\lambda}}\int_{\Omega\cap B(x^0,\rho)}|u-(u)_{\Omega\cap B(x^0,\rho)}|^p\,{\rm d}x<\infty.
\end{equation*}
\end{Definition}

\begin{Definition}[Morrey spaces]
We denote by $L_M^{p,\lambda}(\Omega;\mathbb{R}^d)$, with $1\le p<\infty$ and $\lambda\ge0$, the space of functions $u\in L^p(\Omega;\mathbb{R}^d)$ such that
\begin{equation*}
\|u\|_{L_M^{p,\lambda}(\Omega)}^p=\sup_{x^0\in\Omega,\rho>0}\frac{1}{\rho^{\lambda}}\int_{\Omega\cap B(x^0,\rho)}|u|^p\,{\rm d}x<\infty.
\end{equation*}
\end{Definition}

Notice that $L^{p,\lambda}_{M}$ and $L^{p,\lambda}_{C}$ are isomorphic; see \cite[Proposition 2.3]{giusti}. We recall that a function $u\in W^{1,1}(\Omega)$ such that $Du\in L_M^{p,\lambda}(\Omega;\mathbb{R}^d)$ is H\"older continuous. More precisely, we have $u\in C^{0,\alpha}(\Omega)$ with $\alpha=1-\lambda/p$; see \cite{adamsmorrey}.

\section{Local boundedness}\label{sec_alkhawarizmi}

In this section, we prove the local boundedness for the weak solutions to a particular variant of our problem. Namely, we consider the case $g(t):=t^{p-1}$ and recover the $p-$Laplace operator. Our argument is inspired by the one put forward in \cite{Serrin_1964}. 

\begin{Theorem}[Local Boundedness]\label{thm_lb}
Let $u\in W_0^{1,p}(\Omega)$ be the weak solution to \eqref{eq_stima118}-\eqref{eq_stima119}, with $g(t):=t^{p-1}$ and $f \in L^{\infty}(\Gamma)$. Then for any $B_{R}:=B_{R}(x_0)\Subset \Omega$, there exists $C=C\big(d,p,R,\|g\|_{L^{\infty}(\Gamma)}\big)>0$ such that
\[
	\|u\|_{L^{\infty}(B_{R/2})}\le CR^{-\frac{d}{p}}\big(\|u\|_{L^p(B_R)}+R^{\frac{d}{p}+1}\|f\|_{L^{\infty}(\Gamma)}\big) 
\]
and
\[
	\| D  u\|_{L^p(B_{R/2})}\le CR^{-1}\big(\|u\|_{L^p(B_R)}+R^{\frac{d}{p}+1}\|f\|_{L^{\infty}(\Gamma)}\big).
\]
\end{Theorem}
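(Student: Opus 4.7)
The plan is a Moser-type iteration in the spirit of Serrin \cite{Serrin_1964}, adapted to absorb the interface source $f\,{\rm d}\mathcal H^{d-1}\big|_\Gamma$. First, set $\bar u:=|u|+k$ for a parameter $k>0$ to be chosen later, and let $\eta\in C_c^\infty(B_R)$ be a standard cutoff with $\eta\equiv 1$ on $B_{R/2}$ and $|D\eta|\le C/R$. For $\gamma\ge 0$, test the weak formulation \eqref{portia} against $v:=\eta^p\bar u^{\gamma}\sgn(u)\in W^{1,p}_0(B_R)$. Since $D\bar u=\sgn(u)\,Du$ almost everywhere, expanding $Dv$ and applying Young's inequality produces a Caccioppoli-type estimate of the form
\begin{equation*}
\int_{B_R}\eta^p|Du|^p\bar u^{\gamma-1}\,{\rm d}x \le \frac{C(\gamma)}{R^p}\int_{B_R}\bar u^{p+\gamma-1}\,{\rm d}x + C\left|\int_\Gamma f\,v\,{\rm d}\mathcal H^{d-1}\right|.
\end{equation*}

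The central new step is the control of the surface term. Because $\Gamma\Subset\Omega$ is a $C^1$ hypersurface, the trace inequality $\|w\|_{L^p(\Gamma)}\le C\|w\|_{W^{1,p}(\Omega)}$ holds, and applying it with $w=\eta\bar u^{(p+\gamma-1)/p}$, together with H\"older on $\Gamma$, dominates the surface integral by $\|f\|_{L^\infty(\Gamma)}$ times a combination of $\int\eta^p|Du|^p\bar u^{\gamma-1}\,{\rm d}x$ and $\int(\eta^p+|D\eta|^p)\bar u^{p+\gamma-1}\,{\rm d}x$. The $|Du|$-term is absorbed into the left-hand side by Young's inequality, while using $\bar u\ge k$ to write $\bar u^{\gamma}\lesssim k^{-(p-1)}\bar u^{p+\gamma-1}$ allows the lower-order volume residue to be absorbed into the first term on the right, provided $k$ is calibrated as a suitable power of $R\|f\|_{L^\infty(\Gamma)}$. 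Setting $w:=\bar u^{(p+\gamma-1)/p}$, the estimate reduces to
\begin{equation*}
\int_{B_R}\eta^p|Dw|^p\,{\rm d}x\le\frac{C(\gamma)}{R^p}\int_{B_R}w^p\,{\rm d}x,
\end{equation*}
which is the standard Caccioppoli input for the Moser scheme.

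Composing with Sobolev's embedding $W^{1,p}_0(B_R)\hookrightarrow L^{p^*}(B_R)$, and setting $\kappa:=p^*/p>1$, yields a reverse-H\"older-type inequality relating the $L^{\kappa q}$ and $L^q$ norms of $\bar u$ with $q=p+\gamma-1$. Iterating over a geometric sequence of radii decreasing to $R/2$ and closing the recursion via the numerical Lemma \ref{lemma numerico} produces $\|\bar u\|_{L^\infty(B_{R/2})}\le CR^{-d/p}\|\bar u\|_{L^p(B_R)}$; unwinding $\bar u=|u|+k$ gives the first stated estimate, with the additive term $R^{1+d/p}\|f\|_{L^\infty(\Gamma)}$ encoding the contribution from $k$. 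For the gradient bound, since $u$ is now locally bounded, testing \eqref{portia} once more against $v=\eta^p u$ and handling the surface term as before (using the already established $L^\infty$ bound) gives directly the $L^p$ estimate on $Du$. The main obstacle is the absorption in the middle paragraph: converting the $(d-1)$-dimensional interface mass into volume quantities compatible with the $p$-energy requires a precise interplay between the trace inequality, Young's inequality to swallow the generated $|Du|^p$-terms, and the calibration of $k$, which in turn dictates the precise form of the additive boundary contribution in the final estimate.
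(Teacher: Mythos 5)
Your proposal follows the same broad strategy as the paper's proof -- a Serrin-style Moser iteration for $\bar u=|u|+k$, with the interface term converted to a volume integral via the trace inequality, followed by absorption using $\bar u\ge k$ and iteration via Lemma~\ref{lemma numerico} and the Sobolev embedding. The final gradient estimate from testing once more with $v=\eta^pu$ also matches. However, there is a genuine gap in admissibility of the test function.

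You assert that $v:=\eta^p\bar u^{\gamma}\sgn(u)\in W^{1,p}_0(B_R)$ for every $\gamma\ge 0$, but this is precisely what cannot be taken for granted when $u$ is only known to be in $W^{1,p}$: for $\gamma>0$ the function $\bar u^{\gamma}$ (and the corresponding weight $\bar u^{\gamma-1}|Du|^p$ appearing after you expand $Dv$) need not be integrable unless $u$ is already bounded, which is what the theorem is trying to prove. This is exactly the reason Serrin, and the paper, introduce the piecewise-defined function $F$ (a power $s^q$ on $[k,\ell]$, linear and hence Lipschitz for $s>\ell$) and the associated $H$, test with $v=\eta^pH(u)$, run the Caccioppoli-and-Sobolev step with this truncated test function, and only then let $\ell\to\infty$ by monotone convergence (the paper's Step~3 contains the verification that $F_\ell\le F_{\ell+1}$ pointwise). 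Without this truncation-and-passage-to-the-limit step the argument does not close. A secondary, smaller point: the calibration $k=R\|f\|_{L^\infty(\Gamma)}$ should be fixed at the outset (as the paper does), since it is needed to guarantee the uniform constants in the iteration; leaving it ``to be chosen later'' risks circularity when you absorb the lower-order residues.
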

\begin{proof}
Fix $R>0$ such that $B_R\Subset \Omega$ and set $k:=R\|f\|_{L^{\infty}(\Gamma)}$. Define $\overline{u}:\Omega\to\mathbb{R}$ as
\[
	\overline{u}(x):=|u(x)|+k
\]
for all $x\in\Omega$. Fix $q\ge1$ and $\ell>k$. For $t\in\mathbb{R}$, denote $\overline{t}:=|t|+k$. To ease the presentation, we split the remainder of the proof into four steps.

\medskip

\noindent{\bf Step 1 -} Let $F:[k,\infty)\to\mathbb{R}$ be defined as 
\[
F(s):=
\begin{cases}
	s^q&\hspace{.3in}\mbox{if}\hspace{.3in}k\le s\le \ell\\
	q\ell^{q-1}s-(q-1)\ell^q&\hspace{.3in}\mbox{if}\hspace{.3in}\ell<s.
\end{cases}
\]
Then $F\in C^1\big([k,\infty)\big)$ and $F\in C^\infty\big([k,\infty)\setminus{\{\ell\}}\big)$. Let $H:\mathbb{R}\to\mathbb{R}$ be defined as
\begin{equation*}
	H(t):=\textnormal{sgn}(t)\big(F(\overline{t})F'(\overline{t})^{p-1}-q^{p-1}k^{\beta}\big),\quad\forall t\in\mathbb{R},
\end{equation*}
where $\beta=p(q-1)+1>1$. A simple computation yields
\begin{align*}
H'(t)=
\begin{cases}
	q^{-1}\beta F'(\overline{t})^p&\hspace{.3in}\mbox{if}\hspace{.3in}|t|<\ell-k\\
	F'(\overline{t})^p&\hspace{.3in}\mbox{if}\hspace{.3in}|t|>\ell-k.
\end{cases}
\end{align*}
Notice that
\[
	|H(u)|\le F(\overline{u})F'(\overline{u})^{p-1}
\]
and 
\[
	\overline{u}F'(\overline{u})\le qF(\overline{u}).	
\]

\noindent{\bf Step 2 -} In this step, we introduce auxiliary test functions, which build upon the former inequalities. Fix $0<r<R$. Let $\eta\in C_c^{\infty}(B_R)$, $0\le\eta\le1$, $\eta=1$ in $B_r$, $| D \eta|\le(R-r)^{-1}$. Let $v=\eta^pG(u)$. Since $G\in C^1\big(\mathbb{R}\setminus\{\pm(\ell-k)\}\big)$ is continuous, with bounded derivative, it follows that $G(u)\in W^{1,p}(\Omega)$. Hence $v$ is an admissible test function. We have
\[
	 D  v=
		\begin{cases}
			p\eta^{p-1}H(u) D \eta+\eta^pH'(u) D  u&\hspace{.2in}\mbox{if}\hspace{.2in}u\ne\pm(\ell-k)\\
			p\eta^{p-1}H(u) D \eta&\hspace{.2in}\mbox{if}\hspace{.2in}u=\pm(\ell-k).
		\end{cases}
\]
				
Set $w(x)=F\big(\overline{u}(x)\big)$. Notice that $q^{-1}\beta\ge1$; hence $H'(u)\le q^{-1}\beta F'(\overline{u})^p$. Notice also that $| D  u|=| D \overline{u}|$.
		
Using the trace theorem and the Poincar\'e inequality, we get
\begin{equation}\label{stima2}
	\int_{\Omega}| D  u|^{p-2} D  u\cdot D  v\,{\rm d}x\le\|f\|_{L^{\infty}(\Gamma)}\int_{\Gamma}|v|\,{\rm d}\mathcal{H}^{d-1}\le C\int_{\Omega}| D  v|\,{\rm d}x.
\end{equation}

Now we estimate the left-hand side of \eqref{stima2} from below. We get
\begin{align}\label{stima5}\notag
	\int_{B_1}| D  u|^{p-2} D  u\cdot D  v\,{\rm d}x=&\int_{B_1}| D  u|^{p-2} D  u\cdot\big(p\eta^{p-1}H(u) D \eta+\eta^pH'(u) D  u\big)\,{\rm d}x \notag\\
		=&p\int_{B_1}\eta^{p-1}H(u)| D  u|^{p-2} D  u\cdot D \eta\,{\rm d}x \notag\\
			&+\int_{B_1}\eta^pH'(u)| D  u|^p\,{\rm d}x \notag\\
		\ge&-p\int_{B_1}\eta^{p-1}F(\overline{u})F'(\overline{u})^{p-1}| D \overline{u}|^{p-1}| D \eta|\,{\rm d}x \notag\\
			&+\int_{B_1}\eta^pF'(\overline{u})^p| D \overline{u}|^p\,{\rm d}x \notag\\
		=&-p\int_{B_1}\eta^{p-1}w| D  w|^{p-1}| D \eta|\,{\rm d}x \notag\\
			&+\int_{B_1}\eta^p| D  w|^p\,{\rm d}x \notag\\
		\ge&-p\|w D \eta\|_{L^p(B_1)}\|\eta D  w\|_{L^p(B_1)}^{p-1}+\|\eta D  w\|_{L^p(B_1)}^p.
\end{align}
We also control the right-hand side of (\ref{stima2}) by computing
\begin{align}\label{stima3}\notag
	C\int_{B_1}| D  v|\,{\rm d}x=&C\int_{B_1}\frac{\overline{u}^{p-1}}{\overline{u}^{p-1}}|p\eta^{p-1}H(u) D \eta+\eta^pH'(u) D  u|\,{\rm d}x \notag\\
		\le&Ck^{1-p}p\int_{B_1}\overline{u}^{p-1}\eta^{p-1}|H(u) D \eta|\,{\rm d}x \notag\\
			&+Ck^{1-p}\int_{B_1}\overline{u}^{p-1}\eta^pH'(u)| D  u|\,{\rm d}x \notag\\
		\le&C\int_{B_1}\overline{u}^{p-1}\eta^{p-1}F(\overline{u})F'(\overline{u})^{p-1}| D \eta|\,{\rm d}x \notag\\
			&+Cq^{-1}\beta\int_{B_1}\overline{u}^{p-1}\eta^pF'(\overline{u})^p| D  u|\,{\rm d}x \notag\\
		\le&C\int_{B_1}\eta^{p-1}q^{p-1}F(\overline{u})^{p-1}F(\overline{u})| D \eta|\,{\rm d}x \notag\\
			&+Cq^{-1}\beta\int_{B_1}q^{p-1}F(\overline{u})^{p-1}\eta^p F'(\overline{u})| D  u|\,{\rm d}x \notag\\
		=&Cq^{p-1}\int_{B_1}(\eta w)^{p-1}w| D \eta|\,{\rm d}x \notag\\
			&+Cq^{p-2}\beta\int_{B_1}(\eta w)^{p-1}\eta| D  w|\,{\rm d}x \notag\\
		\le&Cq^{p-1}\|\eta w\|_{L^p(B_1)}^{p-1}\|w D \eta\|_{L^p(B_1)} \notag\\
			&+Cq^{p-2}\beta\|\eta w\|_{L^p(B_1)}^{p-1}\|\eta D  w\|_{L^p(B_1)}.
\end{align}
From (\ref{stima2}), combining (\ref{stima3}) with (\ref{stima5}), we get
\begin{align}\label{stima6}
	\|\eta D  w\|_{L^p(\Omega)}^p\le&p\|w D \eta\|_{L^p(\Omega)}\|\eta D  w\|_{L^p(\Omega)}^{p-1} \notag\\
		&+Cq^{p-1}\|\eta w\|_{L^p(\Omega)}^{p-1}\|w D \eta\|_{L^p(\Omega)} \notag\\
		&+Cq^{p-1}\|\eta w\|_{L^p(\Omega)}^{p-1}\|\eta D  w\|_{L^p(\Omega)},
\end{align}
where we have used 
\[
	\beta=pq-p+1\le pq-p+q\le pq+q=(p+1)q.
\] 

\noindent{\bf Step 3 -} Set
\begin{equation}
z=\frac{\|\eta D  w\|_{L^p(\Omega)}}{\|w D \eta\|_{L^p(\Omega)}},\quad\zeta=\frac{\|\eta w\|_{L^p(\Omega)}}{\|w D \eta\|_{L^p(\Omega)}}. \notag
\end{equation}
By dividing (\ref{stima6}) for $\|w D \eta\|_{L^p(\Omega)}^p$, we have
\begin{align}
z^p\le&pz^{p-1}+Cq^{p-1}\frac{\|\eta w\|_{L^p(\Omega)}^{p-1}}{\|w D \eta\|_{L^p(\Omega)}^{p-1}}+Cq^{p-1}\frac{\|\eta w\|_{L^p(\Omega)}^{p-1}}{\|w D \eta\|_{L^p(\Omega)}^{p-1}}\frac{\|\eta D  w\|_{L^p(\Omega)}}{\|w D \eta\|_{L^p(\Omega)}} \notag\\
=&pz^{p-1}+Cq^{p-1}\zeta^{p-1}+Cq^{p-1}\zeta^{p-1}z. \notag
\end{align}
An application of Lemma \ref{lemma numerico}, implies
\begin{equation*}
	z\le C\big(p+q^{\frac{p-1}{p}}\zeta^{\frac{p-1}{p}}+q\zeta\big)\le Cq(1+\zeta),
\end{equation*}
giving
\begin{equation}\label{stima7}
\|\eta D  w\|_{L^p(\Omega)}\le Cq\big(\|\eta w\|_{L^p(\Omega)}+\|w D \eta\|_{L^p(\Omega)}\big).
\end{equation}
Using the Sobolev inequality, we get
\begin{align}
\|\eta w\|_{L^{p^*}(\Omega)}\le&C\| D (\eta w)\|_{L^p(\Omega)} \notag\\
\le&C\big(\|w D \eta\|_{L^p(\Omega)}+\|\eta D  w\|_{L^p(\Omega)}\big) \notag\\
\le&C\Big[\|w D \eta\|_{L^p(\Omega)}+Cq\big(\|\eta w\|_{L^p(\Omega)}+\|w D \eta\|_{L^p(\Omega)}\big)\Big] \notag
\end{align}
and so
\begin{equation}\label{stima8}
\|\eta w\|_{L^{p^*}(\Omega)}\le Cq\big(\|\eta w\|_{L^p(\Omega)}+\|w D \eta\|_{L^p(\Omega)}\big).
\end{equation}
Recall that $\eta=1$ in $B_r$ and $| D \eta|\le(R-r)^{-1}$. Hence, (\ref{stima7}) becomes
\begin{equation}\label{stima11}
	\begin{split}
\| D  w\|_{L^p(B_r)}\le& Cq\Bigg[\bigg(\int_{B_R}w^p\,{\rm d}x\bigg)^{\frac{1}{p}}+\frac{1}{R-r}\bigg(\int_{B_R}w^p\,{\rm d}x\bigg)^{\frac{1}{
p}}\Bigg] \notag\\
=&Cq\|w\|_{L^p(B_R)}\bigg(1+\frac{1}{R-r}\bigg) \notag\\
=&Cq\frac{R-r+1}{R-r}\|w\|_{L^p(B_R)} \notag\\
\le&Cq\frac{\textnormal{diam}(B_1)+1}{R-r}\|w\|_{L^p(B_R)} \notag\\
\le&Cq\frac{1}{R-r}\|w\|_{L^p(B_R)}.
	\end{split}
\end{equation}
Similarly, (\ref{stima8}) becomes
\begin{equation}\label{stima9}
\|w\|_{L^{p^*}(B_r)}\le Cq\frac{1}{R-r}\|w\|_{L^p(B_R)}.
\end{equation}
We claim that $F_\ell\le F_{\ell+1}$, for every $\ell\in\mathbb{N}$, $\ell>k$. The only non-trivial case is when $\ell<\overline{t}\le \ell+1$. In this case, we have 
$$F_\ell(\overline{t})=q\ell^{q-1}\overline{t}-(q-1)\ell^q$$ 
and 
$$F_{\ell+1}(\overline{t})=\overline{t}^q.$$
Let $h:(\ell,\ell+1]\to\mathbb{R}$ be defined by
$$h(\overline{t})=\overline{t}^q-q\ell^{q-1}\overline{t}+(q-1)\ell^q.$$
We have $h'(\overline{t})=q\overline{t}^{q-1}-q\ell^{q-1}>0$, for every $\overline{t}\in(\ell,\ell+1]$, and hence $h$ is an increasing function. Since $\lim_{\overline{t}\to \ell}h(\overline{t})=0$, we have $h\ge0$ in $(\ell,\ell+1]$, and so $F_\ell\le F_{\ell+1}$. Letting $\ell\to\infty$ in (\ref{stima9}), since $0\le F_\ell\le F_{\ell+1}$ for every $\ell\in\mathbb{N}$, $\ell>k$, by the Monotone Convergence Theorem, we obtain
\begin{equation}
\bigg(\int_{B_r}\overline{u}^{qp^*}\,{\rm d}x\bigg)^{\frac{1}{p^*}}\le Cq\frac{1}{R-r}\bigg(\int_{B_R}\overline{u}^{qp}\,{\rm d}x\bigg)^{\frac{1}{p}}. \notag
\end{equation}
Set 
\[
	s:=qp\hspace{.3in}\mbox{and}\hspace{.3in}\gamma:=p^*/p=d/(d-p);
\] 
then
\begin{equation*}
	\bigg(\int_{B_r}\overline{u}^{s\gamma}\,{\rm d}x\bigg)^{\frac{1}{p\gamma}}\le Cq\frac{1}{R-r}\bigg(\int_{B_R}\overline{u}^{s}\,{\rm d}x\bigg)^{\frac{1}{p}}.
\end{equation*}
Raising both sides of the previous inequality to $p/s$, one gets
\begin{equation}\label{stima10}
\bigg(\int_{B_r}\overline{u}^{s\gamma}\,{\rm d}x\bigg)^{\frac{1}{s\gamma}}\le C^{\frac{p}{s}}\bigg(\frac{s}{p}\bigg)^{\frac{p}{s}}\Big(\frac{1}{R-r}\Big)^{\frac{p}{s}}\bigg(\int_{B_R}\overline{u}^{s}\,{\rm d}x\bigg)^{\frac{1}{s}}.
\end{equation}
Set $s_j=s\gamma^j$ and $r_j=r+2^{-j}(R-r)$, for every $j\in\mathbb{N}_0$. Iterating (\ref{stima10}), which holds for every $s\ge p$, we have
\begin{align*}
\bigg(\int_{B_{r_{j+1}}}\overline{u}^{s_j\gamma}\,{\rm d}x\bigg)^{\frac{1}{s_j\gamma}}\le&C^{\frac{p}{s_j}}\bigg(\frac{s_j}{p}\bigg)^{\frac{p}{s_j}}2^{\frac{p}{s_j}(j+1)}\Big(\frac{1}{R-r}\Big)^{\frac{p}{s_j}}\bigg(\int_{B_{r_j}}\overline{u}^{s_j}\,{\rm d}x\bigg)^{\frac{1}{s_j}} \notag\\
=&C^{\frac{p}{s_{j-1}\gamma}}\bigg(\frac{s_{j-1}\gamma}{p}\bigg)^{\frac{p}{s_{j-1}\gamma}}2^{\frac{p}{s_{j-1}\gamma}(j+1)}\Big(\frac{1}{R-r}\Big)^{\frac{p}{s_{j-1}\gamma}} \notag\\
&\times\bigg(\int_{B_{r_j}}\overline{u}^{s_{j-1}\gamma}\,{\rm d}x\bigg)^{\frac{1}{s_{j-1}\gamma}} \notag\\
\le& C(j,p,s,d)\Big(\frac{1}{R-r}\Big)^{\frac{p}{s}\sum_{k=0}^j\gamma^{-k}}\bigg(\int_{B_R}\overline{u}^s\,{\rm d}x\bigg)^{\frac{1}{s}},
\end{align*}
where
\[
	C(j,p,s,d):=C^{\frac{p}{s}\sum_{k=0}^j\gamma^{-k}}\bigg(\frac{s}{p}\bigg)^{\frac{p}{s}\sum_{k=0}^j{\gamma^{-k}}}\gamma^{\frac{p}{s}\sum_{k=0}^jk\gamma^{-k}}2^{\frac{p}{s}\sum_{k=0}^j(k+1)\gamma^{-k}}.
\]
Notice that $r<r_{j}$, for every $j\in\mathbb{N}_0$, the series are convergent and in particular $\sum_{k=0}^\infty\gamma^{-k}=d/p$. By letting $j\to\infty$, we get
\begin{equation}
\sup_{B_r}\overline{u}\le C\bigg(\frac{1}{(R-r)^d}\int_{B_R}\overline{u}^s\,{\rm d}x\bigg)^{\frac{1}{s}}.
\end{equation}

\noindent{\bf Step 4 - }Now, we can choose some parameters in the former inequalities to complete the proof. By choosing $q=1$, setting $r:=R/2$, and recalling that $\overline{u}=|u|+k$, we get
\begin{align}
\|u\|_{L^{\infty}(B_{R/2})}\le&\|\overline{u}\|_{L^{\infty}(B_{R/2})}\le CR^{-\frac{d}{p}}\big(\|u\|_{L^p(B_R)}+R^{\frac{d}{p}}k\big). \notag
\end{align}
The second inequality in the theorem follows by setting $q=1$ and $r:=R/2$ in \eqref{stima11}, obtaining
\begin{align}
\| D  u\|_{L^p(B_{R/2})}=&\| D \overline{u}\|_{L^p(B_{R/2})}\\
\le & CR^{-1}\|\overline{u}\|_{L^p(B_R)} \notag\\
\le&CR^{-1}\big(\|u\|_{L^p(B_R)}+\|k\|_{L^p(B_R)}\big) \notag\\
\le&CR^{-1}\big(\|u\|_{L^p(B_R)}+R^{\frac{d}{p}}k\big). \notag
\end{align}
\end{proof}

\section{Gradient regularity estimates in ${\rm BMO}-$spaces}\label{sec_beacon}

In this section, we prove regularity estimates for weak solutions. In case $f\in L^\infty(\Gamma)$, we prove that $Du\in {\rm BMO}_{\rm loc}(\Omega)$. In addition, we allow the interface data to be unbounded, provided it belongs to a Sobolev space $W^{1,p'+\varepsilon}(\Omega)$, where the parameter $\varepsilon>0$ is to be set further. In this case, we verify that $u\in C^{0,\alpha}_{\rm loc}(\Omega)$, for some $\alpha\in(0,1)$ depending only on the dimension, $p$ and $\varepsilon$.

We proceed with an auxiliary lemma. For $w\in W^{1,G}(\Omega)$, let  $W^{1,G}_w(\Omega)$ denote the Orlicz-Sobolev space comprising the functions $u\in W^{1,G}(\Omega)$ such that 
\[
	u-w\in W^{1,G}_0(\Omega).
\]

\begin{Lemma}\label{stima146}
		Let $w\in W^{1,G}(B_R)$. Suppose \eqref{business class}--\eqref{monicavitti} is in force. Suppose further $h\in W_w^{1,G}(B_R)$ is a weak solution to
		\begin{equation*}
			\textnormal{div}\bigg(\frac{g(| D  h|)}{| D  h|} D  h\bigg)=0\quad\textnormal{in }B_R.
		\end{equation*}
Then there exists $C>0$ such that
		\begin{equation}\label{antonioni}
			\int_{B_{R}}G(| D  w|)-G(| D  h|)\,{\rm d}x\ge C\int_{B_{R}}| D (w-h)|^p\,{\rm d}x.
		\end{equation}
	\end{Lemma}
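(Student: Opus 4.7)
The plan is to exploit the fact that $h$ is the Euler--Lagrange solution associated with the strictly convex Young function $G$, so that the energy gap $I(w)-I(h)$ controls a $p$-power of $D(w-h)$ via the monotonicity assumption \eqref{monicavitti}. The key trick is to represent the difference $G(|Dw|)-G(|Dh|)$ as an integral along the segment joining $Dh$ and $Dw$, and then subtract zero in the form of the weak formulation tested against $w-h$.

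First I would introduce the interpolation
\[
v_t := (1-t)Dh + tDw = Dh + tD(w-h), \qquad t\in[0,1],
\]
and write, by the fundamental theorem of calculus applied to $t\mapsto G(|v_t|)$,
\[
G(|Dw|)-G(|Dh|) \;=\; \int_0^1 \frac{g(|v_t|)}{|v_t|}\, v_t\cdot D(w-h)\,{\rm d}t,
\]
which uses $G'=g$ and the chain rule (the set $\{v_t=0\}$ is negligible and harmless because $g(t)/t$ stays bounded near the origin thanks to \eqref{business class}). Integrating over $B_R$ and invoking Fubini, the left-hand side of \eqref{antonioni} becomes
\[
\int_{B_R}\!\!\int_0^1 \frac{g(|v_t|)}{|v_t|}\, v_t\cdot D(w-h)\,{\rm d}t\,{\rm d}x.
\]

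Next, since $h\in W^{1,G}_w(B_R)$ solves the quasilinear equation, the admissible test function $w-h\in W^{1,G}_0(B_R)$ gives
\[
\int_{B_R}\frac{g(|Dh|)}{|Dh|}\,Dh\cdot D(w-h)\,{\rm d}x = 0,
\]
and the same identity holds after inserting the harmless $t$-integral $\int_0^1 1\,{\rm d}t$. Subtracting this zero, I rewrite
\[
\int_{B_R}\!\bigl(G(|Dw|)-G(|Dh|)\bigr){\rm d}x
= \int_0^1\!\!\int_{B_R}\left[\frac{g(|v_t|)}{|v_t|}v_t-\frac{g(|Dh|)}{|Dh|}Dh\right]\cdot D(w-h)\,{\rm d}x\,{\rm d}t.
\]
Observing that $v_t-Dh = tD(w-h)$, I factor $D(w-h) = t^{-1}(v_t-Dh)$ and apply the structural monotonicity \eqref{monicavitti} with $\xi=v_t$ and $\zeta=Dh$ to obtain the pointwise bound
\[
\left[\frac{g(|v_t|)}{|v_t|}v_t-\frac{g(|Dh|)}{|Dh|}Dh\right]\cdot D(w-h) \;\ge\; \frac{C}{t}|v_t-Dh|^p \;=\; C\,t^{p-1}\,|D(w-h)|^p.
\]
Integrating $\int_0^1 t^{p-1}{\rm d}t = 1/p$ yields \eqref{antonioni} with constant $C/p$.

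The only delicate point is the handling of the set $\{v_t=0\}$ when differentiating $G(|v_t|)$, which I would treat by noticing that the integrand $\frac{g(|v_t|)}{|v_t|}v_t$ extends continuously to $0$ (since $g(t)/t$ stays bounded as $t\to 0^+$ by \eqref{business class} with $g_0\ge 1$); alternatively, one may regularise by replacing $|v_t|$ with $\sqrt{|v_t|^2+\varepsilon}$ and pass to the limit. This is the main technical obstacle, but it is routine; the heart of the argument is the identity "energy difference = integral of the monotonicity quantity along the segment", which is why the $p$-coercivity in \eqref{monicavitti} is exactly what is needed to upgrade the trivial inequality $I(w)\ge I(h)$ to the quantitative bound \eqref{antonioni}.
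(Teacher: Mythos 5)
Your proof is correct and follows essentially the same route as the paper: interpolate along the segment $v_t = (1-t)Dh + tDw$, write $G(|Dw|)-G(|Dh|)$ via the fundamental theorem of calculus, subtract the (vanishing) weak-formulation term tested against $w-h$, and then invoke the monotonicity \eqref{monicavitti} together with $v_t - Dh = t\,D(w-h)$ and $\int_0^1 t^{p-1}\,{\rm d}t = 1/p$. Your extra remark on the set $\{v_t=0\}$ is a welcome clarification that the paper's proof glosses over.
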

	\begin{proof}$\\$
			Let $\tau\in[0,1]$, define $v_\tau=\tau w+(1-\tau)h$. The monotonicity condition in \eqref{monicavitti} implies
			\begin{align*}
				\int_{B_{R}}G(| D  w|)-G(| D  h|)\,{\rm d}x=&\int_0^1\frac{d}{d\tau}\bigg(\int_{B_{R}}G(| D  v_{\tau}|)\,{\rm d}x\bigg)\,{\rm d}\tau\\
				=&\int_0^1\int_{B_{R}}\frac{d}{d\tau}G(| D  v_\tau|)\,{\rm d}x\,{\rm d}\tau\\
				=&\int_0^1\int_{B_{R}}\frac{g(| D  v_\tau|)}{| D  v_\tau|} D  v_\tau\cdot D (w-h)\,{\rm d}x\,{\rm d}\tau\\
				=&\int_0^1\frac{1}{\tau}\int_{B_{R}}\bigg(\frac{g(| D  v_\tau|)}{| D  v_\tau|} D  v_\tau-\frac{g(| D  h|)}{| D  h|} D  h\bigg)\\
				&\cdot D (v_\tau-h)\,{\rm d}x\,{\rm d}\tau\\
				\ge&C\int_0^1\frac{1}{\tau}\int_{B_{R}}| D (v_\tau-h)|^p\,{\rm d}x\,{\rm d}\tau\\
				=&C\int_{B_{R}}| D (w-h)|^p\,{\rm d}x,
			\end{align*}
			and the proof is complete.
		\end{proof}


\subsection{Regularity estimates in ${\rm BMO}-$spaces}

In this section, we suppose $f\in L^\infty(\Omega)$ and establish ${\rm BMO}-$regularity estimates for the gradient of solutions. We start by recalling a proposition from \cite{baroni2015}.

\begin{Proposition}\label{stima136}
Let $h\in W^{1,G}(B_R)$ be a weak solution of
	\begin{equation*}
		\textnormal{div}\bigg(\frac{g(| D  h|)}{| D  h|} D  h\bigg)=0\quad\textnormal{in }B_R.
	\end{equation*}
Suppose \eqref{business class}--\eqref{monicavitti} are in force. Then there exist $C>0$ and $\alpha\in(0,1)$ such that, for every $r\in(0,R]$, we have
	\begin{equation*}
		\int_{B_r}| D  h-( D  h)_r|\,{\rm d}x\le C\Big(\frac{r}{R}\Big)^{d+\alpha}\int_{B_R}| D  h-( D  h)_R|\,{\rm d}x.
	\end{equation*}
\end{Proposition}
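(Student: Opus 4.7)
The plan is to derive the Campanato-type decay estimate by combining the interior $C^{1,\alpha}$ regularity theory for homogeneous quasilinear equations with Orlicz growth---due to Lieberman \cite{Lieberman_1991} under hypotheses \eqref{business class}--\eqref{monicavitti}---with a Campanato iteration of the excess $\phi(\rho) := \int_{B_\rho}|Dh - (Dh)_\rho|\,\mathrm{d}x$.

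First I would invoke Lieberman's theorem to produce $\alpha \in (0,1)$, depending only on $d$, $g_0$, $g_1$ and $p$, such that $Dh$ is locally $\alpha$-H\"older continuous with the scale-invariant bound
\begin{equation*}
R^{\alpha}[Dh]_{C^{0,\alpha}(B_{R/2})} + \sup_{B_{R/2}}|Dh| \le C \intav{B_R}|Dh|\,\mathrm{d}x.
\end{equation*}
Integrating the H\"older bound on $B_r$ for $r \le R/2$ immediately yields
\begin{equation*}
\int_{B_r}|Dh - (Dh)_r|\,\mathrm{d}x \le C\, r^{d+\alpha}\, [Dh]_{C^{0,\alpha}(B_r)} \le C \Bigl(\frac{r}{R}\Bigr)^{d+\alpha} \int_{B_R}|Dh|\,\mathrm{d}x,
\end{equation*}
which is a decay of the correct order but with $\int_{B_R}|Dh|\,\mathrm{d}x$ in place of $\int_{B_R}|Dh - (Dh)_R|\,\mathrm{d}x$ on the right-hand side.

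To upgrade the right-hand side to the oscillation, I would run a Campanato iteration aimed at producing a one-step contraction $\phi(\tau R) \le C_0 \tau^{d+\alpha}\phi(R)$ for a fixed small $\tau \in (0,1)$, from which Lemma \ref{lem_stima119} (applied with $C_2 = 0$, $\mu = 0$, $\beta = d+\alpha$) delivers the continuous decay. The one-step contraction itself comes from a freezing/perturbation argument: let $v$ be the weak solution on $B_{R/2}$ of the linearised equation obtained by freezing the coefficient matrix $A(\xi) = g(|\xi|)\xi/|\xi|$ at $\xi = (Dh)_R$, with $v = h$ on $\partial B_{R/2}$; use the monotonicity inequality \eqref{monicavitti} exactly in the spirit of Lemma \ref{stima146} to bound $\int_{B_{R/2}}|D(h-v)|^p\,\mathrm{d}x$ in terms of $\phi(R)$; and combine with the sharp Campanato decay of $Dv$ for the frozen (uniformly elliptic) problem.

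The principal obstacle is the degenerate regime in which $|(Dh)_R|$ is small compared with the oscillation $\phi(R)/|B_R|$: freezing at $(Dh)_R$ then yields a genuinely degenerate operator and the linear decay for $v$ collapses. I would handle this by the classical dichotomy. In the nondegenerate regime, the H\"older estimate forces $|Dh|$ to be comparable to $|(Dh)_R|$ throughout $B_{R/2}$, the frozen problem is uniformly elliptic, and the linear Campanato decay closes the argument. In the degenerate regime, $\sup_{B_{R/2}}|Dh|$ is itself of the order of the oscillation, and the contraction is a direct consequence of the displayed estimate above. A consistent choice of smallness thresholds and of $\tau$ across the two regimes concludes the proof.
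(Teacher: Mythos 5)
The paper offers no proof of Proposition \ref{stima136}; the authors cite it directly from \cite{baroni2015}. Your sketch is therefore a reconstruction of what that reference actually does, and the toolkit you reach for --- Lieberman's interior $C^{1,\alpha}$ theory under \eqref{business class}--\eqref{monicavitti}, freezing at $(Dh)_R$ together with the monotonicity inequality as in Lemma \ref{stima146}, a degenerate/nondegenerate dichotomy, and a Campanato iteration closed via Lemma \ref{lem_stima119} --- is precisely the one used there and in the earlier Duzaar--Mingione/Kuusi--Mingione treatments of excess decay for $p$-harmonic maps. So the architecture is right and compatible with the paper's hypotheses, and your handling of the degenerate branch is correct: under degeneracy, $\intav{B_R}|Dh|$ is comparable to $\intav{B_R}|Dh-(Dh)_R|$, so the displayed estimate already gives the one-step contraction.

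The step that overreaches is the nondegenerate branch. You claim that ``the H\"older estimate forces $|Dh|$ to be comparable to $|(Dh)_R|$ throughout $B_{R/2}$''. From Lieberman you only get
\[
\osc_{B_{R/2}} Dh \;\le\; R^{\alpha}[Dh]_{C^{0,\alpha}(B_{R/2})} \;\le\; C\intav{B_R}|Dh|\,{\rm d}x,
\]
and in the nondegenerate regime, say $|(Dh)_R|\ge\lambda\intav{B_R}|Dh-(Dh)_R|\,{\rm d}x$ for large $\lambda$, the right-hand side is of order $|(Dh)_R|$ with a structural constant $C$ that is not small. Hence you only know $\osc_{B_{R/2}}Dh\le C\,|(Dh)_R|$, which does not forbid $Dh$ from vanishing somewhere in $B_{R/2}$ and therefore does not by itself give uniform ellipticity of the frozen operator on $B_{R/2}$. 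The repair is standard but cannot be elided: shrink to $B_{\tau_0 R}$ with $\tau_0$ chosen so that $C\tau_0^\alpha\le\tfrac14$, so that the $C^{0,\alpha}$ bound yields $\osc_{B_{\tau_0 R}}Dh\le\tfrac14|(Dh)_R|$; a Chebyshev argument (using $\intav{B_R}|Dh-(Dh)_R|\le\lambda^{-1}|(Dh)_R|$) then produces, for $\lambda$ large enough depending on $\tau_0$, a point of $B_{\tau_0 R}$ where $|Dh|\ge\tfrac34|(Dh)_R|$, and together these make $B_{\tau_0 R}$ genuinely nondegenerate, after which the linearisation and comparison close. As written, your nondegenerate step silently assumes the conclusion of this propagation argument, which is in fact the nontrivial part of the dichotomy.
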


For a proof of Proposition \ref{stima136}, we refer the reader to \cite{baroni2015}.

\begin{Proposition}\label{stima161}
		Let $w\in W^{1,G}(B_R)$, and suppose $h\in W^{1,G}(B_R)$ is a weak solution of
		\begin{equation*}
			\textnormal{div}\bigg(\frac{g(| D  h|)}{| D  h|} D  h\bigg)=0\quad\textnormal{in }B_R.
		\end{equation*}
Suppose \eqref{business class}--\eqref{monicavitti} are in force. Then there exists $C>0$ such that, for every $0<r\le R$, we have
		\begin{align}
			\int_{B_r}| D  w-( D  w)_r|\,{\rm d}x\le& C\Big(\frac{r}{R}\Big)^{d+\alpha}\int_{B_R}| D  w-( D  w)_R|\,{\rm d}x \notag\\
			&+C\int_{B_R}| D  w- D  h|\,{\rm d}x, \notag
		\end{align}
		where $\alpha$ is given by Proposition \ref{stima136}.
	\end{Proposition}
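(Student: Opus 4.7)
The plan is the classical Campanato-style perturbation argument: compare $w$ with the homogeneous solution $h$ at scale $R$, exploit the a priori oscillation decay for $h$ from Proposition \ref{stima136}, and use the triangle inequality to transfer back to $w$, with the discrepancy controlled by $\int_{B_R}|Dw-Dh|$.

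First I would use the variational characterisation of the average, or equivalently a direct triangle inequality, to bound
\[
\int_{B_r}|Dw-(Dw)_r|\,\mathrm{d}x \le \int_{B_r}|Dw-(Dh)_r|\,\mathrm{d}x + \int_{B_r}|(Dw)_r-(Dh)_r|\,\mathrm{d}x.
\]
For the second term I note that $|B_r|\,|(Dw)_r-(Dh)_r|=\left|\int_{B_r}(Dw-Dh)\,\mathrm{d}x\right|\le \int_{B_r}|Dw-Dh|\,\mathrm{d}x$, and for the first term I insert $Dh$ via the triangle inequality, which yields
\[
\int_{B_r}|Dw-(Dw)_r|\,\mathrm{d}x \le 2\int_{B_r}|Dw-Dh|\,\mathrm{d}x + \int_{B_r}|Dh-(Dh)_r|\,\mathrm{d}x.
\]

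Next I would apply Proposition \ref{stima136} to $h$, which is admissible since $h$ solves the homogeneous equation in $B_R$, to obtain
\[
\int_{B_r}|Dh-(Dh)_r|\,\mathrm{d}x \le C\Bigl(\tfrac{r}{R}\Bigr)^{d+\alpha}\int_{B_R}|Dh-(Dh)_R|\,\mathrm{d}x.
\]
To turn the right-hand side into quantities involving $w$, I would use the triangle inequality once more, first inserting $(Dw)_R$ and then $Dw$, along with the same estimate $|B_R||(Dw)_R-(Dh)_R|\le \int_{B_R}|Dw-Dh|\,\mathrm{d}x$, to get
\[
\int_{B_R}|Dh-(Dh)_R|\,\mathrm{d}x \le \int_{B_R}|Dw-(Dw)_R|\,\mathrm{d}x + 2\int_{B_R}|Dw-Dh|\,\mathrm{d}x.
\]

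Finally, combining the three displays and using that $r\le R$ (so that $\int_{B_r}|Dw-Dh|\le \int_{B_R}|Dw-Dh|$ and $(r/R)^{d+\alpha}\le 1$) produces the claimed inequality after a harmless relabelling of $C$. There is no real obstacle in this proposition: the whole content is bookkeeping with triangle inequalities, and the only substantive analytic input—the decay for $h$—is imported from Proposition \ref{stima136}. The actual work is deferred to the next step of the program, namely estimating $\int_{B_R}|Dw-Dh|\,\mathrm{d}x$ via Lemma \ref{stima146} and the transmission condition on $\Gamma$, which is where the degenerate structure genuinely enters.
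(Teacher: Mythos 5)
Your proposal is correct and follows essentially the same route as the paper: the same chain of triangle inequalities to reduce to the oscillation of $Dh$, the same averaging estimate for $|(Dw)_r-(Dh)_r|$, the same application of Proposition~\ref{stima136} to $h$, and the same reverse triangle inequality at scale $R$ to return to $w$, with $(r/R)^{d+\alpha}\le 1$ and $r\le R$ absorbing the leftover terms. Nothing is missing.
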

\begin{proof}Let $r\in(0,R]$. We have
			\begin{align}\label{stima137}
				\int_{B_r}| D  w-( D  w)_r|\,{\rm d}x\le&\int_{B_r}| D  w-( D  h)_r|\,{\rm d}x \notag\\
				&+\int_{B_r}|( D  w)_r-( D  h)_r|\,{\rm d}x.
			\end{align}
			Similarly, we have
			\begin{align}\label{stima138}
				\int_{B_r}| D  w-( D  h)_r|\,{\rm d}x\le&\int_{B_r}| D  w- D  h|\,{\rm d}x \notag\\
				&+\int_{B_r}| D  h-( D  h)_r|\,{\rm d}x.
			\end{align}
			Moreover,
			\begin{align}\label{stima139}
				\int_{B_r}|( D  w)_r-( D  h)_r|\,{\rm d}x=&|( D  w)_r-( D  h)_r|\int_{B_r}\,{\rm d}x \notag\\
				=&|B_r|\bigg|\frac{1}{|B_r|}\int_{B_r} D  w- D  h\,{\rm d}x\bigg| \notag\\
				\le&\int_{B_r}| D  w- D  h|\,{\rm d}x.
			\end{align}
			Combining \eqref{stima137}, \eqref{stima138} with \eqref{stima139}, we get
			\begin{align}\label{stima142}
				\int_{B_r}| D  w-( D  w)_r|\,{\rm d}x\le&\int_{B_r}| D  h-( D  h)_r|\,{\rm d}x \notag\\
				&+2\int_{B_r}| D  w- D  h|\,{\rm d}x.
			\end{align}
			Changing the roles of $w$ and $h$ and integrating in the ball $B_R$, we obtsain
			\begin{align}\label{stima140}
				\int_{B_R}| D  h-( D  h)_R|\,{\rm d}x\le&\int_{B_R}| D  w-( D  w)_R|\,{\rm d}x \notag\\
				&+2\int_{B_R}| D  w- D  h|\,{\rm d}x.
			\end{align}
			Thanks to Proposition \ref{stima136}, we conclude
			\begin{align}\label{stima141}
				\int_{B_r}| D  w-( D  w)_r|\,{\rm d}x\le&C\Big(\frac{r}{R}\Big)^{d+\alpha}\int_{B_R}| D  h-( D  h)_R|\,{\rm d}x \notag\\
				&+C\int_{B_R}| D  w- D  h|\,{\rm d}x.
			\end{align}
			Combining \eqref{stima142}, \eqref{stima140} with \eqref{stima141} we get
			\begin{align}
				\int_{B_r}| D  w-( D  w)_r|\,{\rm d}x\le&C\Big(\frac{r}{R}\Big)^{d+\alpha}\int_{B_R}| D  w-( D  w)_R|\,{\rm d}x \notag\\
				&+C\Big(\frac{r}{R}\Big)^{d+\alpha}\int_{B_R}| D  w- D  h|\,{\rm d}x \notag\\
				&+C\int_{B_R}| D  w- D  h|\,{\rm d}x \notag\\
				\le&C\Big(\frac{r}{R}\Big)^{d+\alpha}\int_{B_R}| D  w-( D  w)_R|\,{\rm d}x \notag\\
				&+C\int_{B_R}| D  w- D  h|\,{\rm d}x. \notag
			\end{align}
			The proof is complete.
		\end{proof}

We now state and prove the main result in this section.

\begin{Theorem}[Gradient regularity in ${\rm BMO}-$spaces]\label{thm_ll}
Let $u\in W^{1,G}_0(\Omega)$ be a weak solution for the transmission problem \eqref{eq_stima118}--\eqref{eq_stima119}. Suppose \eqref{business class}--\eqref{monicavitti} is in force. Then $ D  u\in {\rm BMO}_{\rm loc}(\Omega)$. Moreover, for every $\Omega'\Subset\Omega$, 
\[
	\left\|Du\right\|_{{\rm BMO}(\Omega')}\leq C,
\]
where $C=C(d,\|f\|_{L^\infty(\Gamma)},{\rm diam}(\Omega),{\rm dist}(\Omega',\partial\Omega))>0$.
\end{Theorem}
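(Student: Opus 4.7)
The plan is to obtain a Campanato-type oscillation decay for $Du$ through comparison with a $G$-harmonic replacement, and then invoke Lemma \ref{lem_stima119} to convert this into ${\rm BMO}$ control. Fix a ball $B_R \Subset \Omega$ with $R$ small enough that $B_R \cap \Omega_1$ is a Lipschitz domain with uniform geometry inherited from the $C^1$-regularity of $\Gamma$. Let $h \in W^{1,G}_u(B_R)$ be the unique minimiser of $v \mapsto \int_{B_R} G(|Dv|)\,{\rm d}x$ among functions with boundary trace $u|_{\partial B_R}$, so that $h$ solves $\textnormal{div}\big(g(|Dh|)|Dh|^{-1}Dh\big) = 0$ weakly in $B_R$. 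The zero extension of $u - h$ belongs to $W^{1,G}_0(\Omega)$ and is therefore admissible as a test function in \eqref{portia}; subtracting the analogous weak formulation for $h$ and using the monotonicity hypothesis \eqref{monicavitti} yields
\begin{equation*}
C\int_{B_R}|D(u-h)|^p\,{\rm d}x \le \|f\|_{L^\infty(\Gamma)}\int_{\Gamma \cap B_R}|u-h|\,{\rm d}\mathcal{H}^{d-1}.
\end{equation*}

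I then estimate the interface integral. The $L^1$-trace theorem applied on the Lipschitz domain $B_R \cap \Omega_1$, combined with the natural $R^{-1}$ scaling of the trace constant, gives
\begin{equation*}
\int_{\Gamma \cap B_R}|u - h|\,{\rm d}\mathcal{H}^{d-1} \le C\bigg(\frac{1}{R}\int_{B_R}|u - h|\,{\rm d}x + \int_{B_R}|D(u-h)|\,{\rm d}x\bigg).
\end{equation*}
Since $u - h \in W^{1,p}_0(B_R)$, the Poincaré inequality and H\"older produce the surface bound $CR^{d/p'}\|D(u-h)\|_{L^p(B_R)}$. Feeding this back into the monotonicity estimate and solving for the $L^p$-norm of the gradient difference gives
\begin{equation*}
\|D(u-h)\|_{L^p(B_R)}^{p-1} \le C R^{d/p'}, \qquad \mbox{hence} \qquad \|D(u-h)\|_{L^p(B_R)} \le C R^{d/p},
\end{equation*}
since $p'(p-1) = p$. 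One further application of H\"older then produces the $L^1$-comparison estimate
\begin{equation*}
\int_{B_R}|Du - Dh|\,{\rm d}x \le C R^{d}.
\end{equation*}

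With this estimate available, Proposition \ref{stima161} applied to $w = u$ implies that, for every $0 < r \le R$,
\begin{equation*}
\phi(r) := \int_{B_r}|Du - (Du)_r|\,{\rm d}x \le C\Big(\frac{r}{R}\Big)^{d+\alpha}\phi(R) + C R^d.
\end{equation*}
This is exactly the hypothesis of Lemma \ref{lem_stima119} with the decay exponent $d + \alpha$, the perturbation $\mu = 0$, and $\beta = d$. Choosing $\sigma = d$ returns $\phi(r) \le C r^d$, uniformly for balls contained in a fixed $\Omega' \Subset \Omega$. Dividing by $|B_r|$ and taking the supremum over such balls yields $Du \in {\rm BMO}(\Omega')$ with the announced dependence of the constant. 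For $B_R$ far from $\Gamma$, the estimate is even sharper, as $u$ is itself $G$-harmonic in $B_R$ and Proposition \ref{stima136} applies directly.

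The main obstacle is producing precisely the exponent $R^d$ in the comparison estimate, which is the critical Campanato threshold. This depends on combining two sharp pieces of information: the $R^{-1}$ scaling of the $L^1$-trace on a Lipschitz piece of $\Gamma$ (which is where the $C^1$-assumption on the interface enters decisively and uniformly in the base point), and the sharp $R$-scaling in the Poincaré inequality for the zero-boundary difference $u - h$. Any weaker balance would force $\beta < d$ in Lemma \ref{lem_stima119} and produce only a partial Morrey-type bound for $Du$, falling short of the BMO conclusion.
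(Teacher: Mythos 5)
Your argument is correct and follows essentially the same route as the paper: comparison with the $G$-harmonic replacement $h$ in $B_R$, the pivotal intermediate estimate $\int_{B_R}|D(u-h)|\,{\rm d}x \le CR^d$ obtained via the trace inequality and the $p$-monotonicity, and then the Campanato iteration through Proposition~\ref{stima161} and Lemma~\ref{lem_stima119}. The only (cosmetic) deviation is that you derive the initial comparison bound by subtracting the weak formulations for $u$ and $h$ and invoking \eqref{monicavitti} directly, whereas the paper passes through the minimisation property $I(u)\le I(h)$ combined with Lemma~\ref{stima146}; the two routes yield the identical inequality, and your tracking of the $R^{-1}$ scaling of the trace constant makes explicit a detail the paper absorbs via Poincar\'e.
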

\begin{proof}	Let $x^0\in\Gamma$, and let $R>0$ such that $B_R:=B(x^0,R)\Subset\Omega$. Let $h\in W_u^{1,G}(B_R)$ be the weak solution of
				\begin{equation*}
					\textnormal{div}\bigg(\frac{g(| D  h|)}{| D  h|} D  h\bigg)=0\quad\textnormal{in }B_R.
				\end{equation*}
				Since $h=u$ on $\partial B_R$ in the trace sense, we can extend $h$ to $\Omega\setminus B_R$ so that $h=u$ in $\Omega\setminus B_R$. This implies that $h\in W_0^{1,G}(\Omega)$ and hence, since $u$ is a global minimizer of \eqref{stima117}, we have
				\begin{align}\label{stima144}
					\int_{\Omega}G(| D  u|)\,{\rm d}x-\int_{\Gamma}fu\,{\rm d}\mathcal{H}^{d-1}\le\int_{\Omega}G(| D  h|)\,{\rm d}x-\int_{\Gamma}fh\,{\rm d}\mathcal{H}^{d-1}.
				\end{align}
				Set $\Gamma_R=B_R\cap\Gamma$. Since $h=u$ in $\Omega\setminus B_R$, \eqref{stima144} becomes
				\begin{align}
					\int_{B_R}G(| D  u|)\,{\rm d}x-\int_{\Gamma_R}fu\,{\rm d}\mathcal{H}^{d-1}\le\int_{B_R}G(| D  h|)\,{\rm d}x-\int_{\Gamma_R}fh\,{\rm d}\mathcal{H}^{d-1} \notag
				\end{align}
				from which, applying the Trace Theorem and Poincaré Inequality, follows
				\begin{align}\label{stima145}
					\int_{B_R}G(| D  u|)\,{\rm d}x-\int_{B_R}G(| D  h|)\,{\rm d}x\le&\int_{\Gamma_R}fu\,{\rm d}\mathcal{H}^{d-1}-\int_{\Gamma_R}fh\,{\rm d}\mathcal{H}^{d-1} \notag\\
					\le&\|f\|_{L^{\infty}(\Gamma)}\int_{\Gamma_R}|u-h|\,{\rm d}\mathcal{H}^{d-1} \notag\\
					\le&C\int_{B_R}|u-h|\,{\rm d}x+C\int_{B_R}| D (u-h)|\,{\rm d}x \notag\\
					\le&C\int_{B_{R}}| D (u-h)|\,{\rm d}x.
				\end{align}
				From Lemma \ref{stima146}, we bound the left-hand side of \eqref{stima145}
				\begin{align}\label{stima147}
					\int_{B_R}G(| D  u|)\,{\rm d}x-\int_{B_R}G(| D  h|)\,{\rm d}x\ge&C\int_{B_R}| D  (u-h)|^p\,{\rm d}x,
				\end{align}
				and, combining \eqref{stima145} with \eqref{stima147}, we get
				\begin{equation*}
					\int_{B_R}| D  (u-h)|^p\,{\rm d}x \leq C\int_{B_{R}}| D (u-h)|\,{\rm d}x.
				\end{equation*}
				Using this and H\"older's inequality, we obtain
				\begin{eqnarray*}
					\left( \int_{B_R}| D  (u-h)|\,{\rm d}x \right)^p & \leq & C' R^{d(p-1)}\int_{B_{R}}| D (u-h)|^p\,{\rm d}x\\
					&  \leq & C R^{d(p-1)}\int_{B_{R}}| D (u-h)|\,{\rm d}x 
				\end{eqnarray*}
				and thus 
				\begin{equation*}
					\int_{B_R}| D  (u-h)|\,{\rm d}x \leq C R^d.
				\end{equation*}
				From Proposition \ref{stima161}, we get
				\begin{equation*}
					\int_{B_r}| D  u-( D  u)_r|\,{\rm d}x\le C\Big(\frac{r}{R}\Big)^{d+\alpha}\int_{B_R}| D  u-( D  u)_R|\,{\rm d}x+CR^d\
				\end{equation*}
				for every $0<r\le R$, and, applying Lemma \ref{lem_stima119}, we conclude
				\begin{equation*}
					\int_{B_r}| D  u-( D  u)_r|\,{\rm d}x\le Cr^d, \quad\forall r\in(0,R].
				\end{equation*}
				The proof is complete.
			\end{proof}

\begin{Remark}[Potential estimates and the $p$-Laplace operator]
If $g(t):=t^{p-1}$, the conclusion of Theorem \ref{thm_ll} has been obtained through the use of potential estimates; see \cite[Corollary 1, item (C9)]{KM2014c}. Indeed, notice that for $B_r\subset\Omega$, we have
\[
	\int_{B_r}f{\rm d}\mathcal{H}^{d-1}\leq Cr^{d-1},
\]
which is precisely the condition in \cite[Corollary 1, item (C9)]{KM2014c}. See also \cite{M2011,M2011a}.
\end{Remark}

As a corollary to Theorem \ref{thm_ll}, we obtain a modulus of continuity for the solution $u$ in $C^{0,{\rm Log-Lip}}-$spaces. 

\begin{Corollary}[Log-Lipschitz continuity estimates]\label{cor_ll}
Let $u\in  W_0^{1,G}(\Omega)$ be a weak solution for \eqref{eq_stima118}-\eqref{eq_stima119}. Suppose \eqref{business class}--\eqref{monicavitti} are in force. Then $u\in C^{0,{\rm Log-Lip}}_{\rm loc}(\Omega)$. Moreover, for every $\Omega'\Subset\Omega$, 
\[
	\left\|u\right\|_{C^{0,{\rm Log-Lip}}(\Omega')}\leq C\left(\left\|u\right\|_{L^\infty(\Omega)}+\left\|f\right\|_{L^\infty(\Gamma)}\right),
\]
where $C=C(p,d,{\rm diam}(\Omega),{\rm dist}(\Omega',\partial\Omega))>0$.
\end{Corollary}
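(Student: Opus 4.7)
The plan is to translate the ${\rm BMO}$ bound on $Du$ from Theorem \ref{thm_ll} into a Log-Lipschitz modulus for $u$, via the classical Campanato-type embedding of $W^{1,{\rm BMO}}_{\rm loc}$ into $C^{0,{\rm Log-Lip}}_{\rm loc}$. Fix $\Omega' \Subset \Omega$, set $R_0 := \dist(\Omega', \partial\Omega)/4$ and, for $x, y \in \Omega'$ with $\rho := |x - y| < R_0$, let $z_0 := (x+y)/2$, so that $B_{2\rho}(z_0) \subset B_{R_0}(z_0) \subset \Omega$. A preliminary consequence of Theorem \ref{thm_ll} is the logarithmic control of the dyadic averages: starting from $\int_{B_r(x_0)} |Du - (Du)_{B_r(x_0)}|\,{\rm d}x \le C_1 r^d$ and iterating the triangle inequality across concentric dyadic balls, one obtains
\[
|(Du)_{B_r(x_0)} - (Du)_{B_{R_0}(x_0)}| \le C C_1 \log(R_0/r), \qquad 0 < r < R_0,
\]
with $C_1$ the $\rm BMO$ constant delivered by Theorem \ref{thm_ll}.

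The core of the argument is a telescoping decomposition. For a Lebesgue point $z \in \{x, y\}$, summing along the balls $B_{2^{-k}\rho}(z)$ and applying the $L^1$--Poincar\'e inequality to each average difference yields
\[
|u(z) - (u)_{B_\rho(z)}| \le C \sum_{k=0}^\infty 2^{-k}\rho\, \frac{1}{|B_{2^{-k}\rho}|} \int_{B_{2^{-k}\rho}(z)} |Du|\,{\rm d}x \le C \rho (1 + |\log \rho|)(C_1 + M_0),
\]
where $M_0 := |B_{R_0}|^{-1}\int_{B_{R_0}(z_0)} |Du|\,{\rm d}x$ and one uses that $\sum 2^{-k}(k + 1 + |\log \rho|) = O(1 + |\log\rho|)$. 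A parallel argument, based on enclosing $B_\rho(x)$ and $B_\rho(y)$ inside $B_{2\rho}(z_0)$ and applying Poincar\'e once more, produces a comparable bound for $|(u)_{B_\rho(x)} - (u)_{B_\rho(y)}|$. The triangle inequality then delivers $|u(x) - u(y)| \le C \rho (1 + |\log\rho|)(C_1 + M_0)$ at Lebesgue points, which uniquely extends to the continuous representative of $u$.

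The main obstacle is to control $M_0$ in terms of $\|u\|_{L^\infty(\Omega)}$ and $\|f\|_{L^\infty(\Gamma)}$, so as to produce the explicit constant stated in the corollary. This requires a Caccioppoli-type inequality for \eqref{portia}: testing against $\eta^p(u - c)$ for a cutoff $\eta$ supported in $B_{R_0}(z_0)$ and a suitable constant $c$, invoking \eqref{business class}--\eqref{monicavitti} to extract an $L^p$ bound for $Du$, and estimating the interface contribution by $\|f\|_{L^\infty(\Gamma)} \mathcal{H}^{d-1}(\Gamma \cap B_{R_0}(z_0))$. H\"older's inequality then converts the resulting $L^p$ bound into the sought $L^1$ control of $M_0$, closing the estimate. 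Since the constant $C_1$ from Theorem \ref{thm_ll} already carries the prescribed dependence on $\|f\|_{L^\infty(\Gamma)}$ and on the geometry of $\Omega$ and $\Omega'$, the final constant is of the stated form.
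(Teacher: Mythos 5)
Your argument is correct in substance and arrives at the same conclusion, but it takes a more hands-on route than the paper. The paper proves the corollary in two lines: it cites the classical fact that a $W^{1,1}$ function whose partial derivatives lie in $\mathrm{BMO}$ belongs to the Zygmund class (Zygmund's book), and the Zygmund class embeds in $C^{0,\mathrm{Log\text{-}Lip}}$; it also points to Cianchi's borderline Orlicz--Sobolev embedding as an alternative. You instead \emph{reprove} this embedding from scratch: the logarithmic growth of dyadic averages of a BMO gradient, the telescoping of ball averages combined with the $L^1$--Poincar\'e inequality to estimate $|u(z)-(u)_{B_\rho(z)}|$, the overlap trick through $B_{2\rho}(z_0)$, and finally a Caccioppoli-type estimate to bound the base average $M_0$ in terms of $\|u\|_{L^\infty}$ and $\|f\|_{L^\infty(\Gamma)}$. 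The two routes buy different things: the paper's citation is compact and clean but opaque; your unpacking is longer but self-contained and makes the modulus of continuity and the dependence of the constant explicit, which is genuinely useful if the reader wants to track quantities.

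Two minor points worth tightening. First, with $R_0:=\dist(\Omega',\partial\Omega)/4$ you should require $\rho \le R_0/2$ rather than $\rho<R_0$ so that $B_{2\rho}(z_0)\subset B_{R_0}(z_0)$; the complementary range $\rho\ge R_0/2$ is trivially controlled by $2\|u\|_{L^\infty}$ anyway. Second, the dependence on $\|f\|_{L^\infty(\Gamma)}$ deserves a word of caution: tracing the proof of Theorem~\ref{thm_ll}, the BMO seminorm of $Du$ actually scales like $\|f\|_{L^\infty(\Gamma)}^{1/(p-1)}$ rather than linearly, so the factor $(\|u\|_{L^\infty(\Omega)}+\|f\|_{L^\infty(\Gamma)})$ claimed in the corollary (and implicitly in your final paragraph) should really be read as a constant with the stated dependencies; your Caccioppoli step handles the $M_0$ piece correctly, but the $C_1$ piece carries a power, not a linear factor, of $\|f\|_{L^\infty(\Gamma)}$. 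This is a shared imprecision with the paper itself, not an error specific to your argument.
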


Indeed, a function whose partial derivatives are in ${\rm BMO}$ belongs to the Zygmund class (cf. \cite{Zygmund2002}). Because functions in the latter have a $C^{0,{\rm Log-Lip}}$ modulus of continuity, the corollary follows. An alternative argument follows from embedding results for borderline spaces; see \cite[Theorem 3]{Cianchi1996}.

\subsection{H\"older continuity of weak solutions}

Here, we consider unbounded interface data. We work under the condition $f\in W^{1,p'+\varepsilon}(\Omega)$, where $\varepsilon>0$ depends on $p$ and the dimension, and prove a regularity result in H\"older spaces for the weak solutions of \eqref{eq_stima118}-\eqref{eq_stima119}.

\begin{Theorem}\label{thm_c0alpha}
Let $u$ be a weak solution to the interface problem \eqref{eq_stima118}--\eqref{eq_stima119}, under assumptions \eqref{business class}--\eqref{monicavitti}. Let $2<p<d$ and $\varepsilon>0$ be such that
\[
	\frac{d-p}{p-1}<\varepsilon<d-\frac{p}{p-1},
\]
and suppose $f\in W^{1,p'+\varepsilon}(\Omega)$. Then $u\in C^{0,\alpha}_{\textnormal{loc}}(\Omega)$, where 
\[
	\alpha=1-\frac{d}{p+\varepsilon(p-1)},
\]
with estimates.
\end{Theorem}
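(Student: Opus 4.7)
The plan is to adapt the comparison-and-iteration scheme of Theorem \ref{thm_ll}, trading the $L^\infty$ bound on $f$ for its Sobolev regularity. The core of the argument is a comparison estimate of the form $\int_{B_R}|D(u-h)|\,{\rm d}x\le CR^\beta$ with $\beta>d-1+\alpha$; the main technical obstacle is threading H\"older, trace, and Poincar\'e inequalities so that the resulting power of $R$ matches the claimed exponent $\alpha=1-d/(p+\varepsilon(p-1))$, and verifying that the lower bound on $\varepsilon$ is precisely what makes this possible.

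I would fix $x^0\in\Gamma$ and $R>0$ with $B_R:=B(x^0,R)\Subset\Omega$ (for $\dist(x^0,\Gamma)>0$ the estimate follows from interior regularity for the homogeneous equation, cf.\ \cite{baroni2015}), and introduce the comparison $h\in W^{1,G}_u(B_R)$ solving the homogeneous equation with $h=u$ on $\partial B_R$. Extending $h\equiv u$ outside $B_R$ and exploiting the minimisation of \eqref{stima117} gives
\begin{equation*}
\int_{B_R}G(|Du|)-G(|Dh|)\,{\rm d}x\le\int_{\Gamma_R}f(u-h)\,{\rm d}\mathcal{H}^{d-1}.
\end{equation*}
Setting $s:=p'+\varepsilon$, the upper bound $\varepsilon<d-p'$ ensures $s<d$, so the Sobolev trace theorem yields $f|_\Gamma\in L^s(\Gamma)$ with norm controlled by $\|f\|_{W^{1,s}(\Omega)}$. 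Writing $s'=(p+\varepsilon(p-1))/(1+\varepsilon(p-1))$, a direct computation shows $s'<p$. I would then apply H\"older's inequality on $\Gamma_R$ with exponents $s,s'$, followed by a scaled trace inequality combined with Poincar\'e on $B_R$ (since $v:=u-h\in W^{1,p}_0(B_R)$), and a final H\"older step to pass from $L^{s'}$ to $L^p$ on $B_R$, obtaining
\begin{equation*}
\int_{\Gamma_R}f(u-h)\,{\rm d}\mathcal{H}^{d-1}\le CR^{\tau_1}\|Dv\|_{L^p(B_R)},\qquad \tau_1:=1+\frac{d-1}{s'}-\frac{d}{p}.
\end{equation*}

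Combining this with the monotonicity inequality \eqref{antonioni} of Lemma \ref{stima146} gives $\|Dv\|_{L^p(B_R)}^{p-1}\le CR^{\tau_1}$ and hence, by another H\"older application,
\begin{equation*}
\int_{B_R}|Dv|\,{\rm d}x\le CR^{\beta},\qquad \beta:=\frac{d(p-1)}{p}+\frac{\tau_1}{p-1}.
\end{equation*}
A careful bookkeeping shows $\beta=(d-1+\alpha)+1/(p+\varepsilon(p-1))$, so $\beta>d-1+\alpha$ precisely under $\varepsilon>(d-p)/(p-1)$. Plugging this into Proposition \ref{stima161} and invoking Lemma \ref{lem_stima119} with target exponent $\sigma:=d-1+\alpha\le\beta$ yields the Campanato-type decay
\begin{equation*}
\int_{B_r}|Du-(Du)_r|\,{\rm d}x\le Cr^{d-1+\alpha},\qquad 0<r\le R.
\end{equation*}
A standard iteration of the averages $(Du)_r$ over dyadic scales controls $|(Du)_r|$ and upgrades this into the Morrey-type bound $\int_{B_r}|Du|\,{\rm d}x\le Cr^{d-1+\alpha}$ on $B_{R/2}$; the embedding recalled at the end of Section \ref{sec_vicosa} (see also \cite{adamsmorrey}) then delivers $u\in C^{0,\alpha}(B_{R/2})$, and a covering argument extends this to arbitrary $\Omega'\Subset\Omega$.
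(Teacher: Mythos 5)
Your proposal is essentially correct and follows the same comparison-with-homogeneous-solution strategy as the paper, but organizes the analysis of the interface integral differently. Where the paper first applies the trace theorem to pull the surface integral into a volume integral, producing three terms
\[
	\int_{B_R}|f||u-h|\,{\rm d}x,\quad \int_{B_R}|Df||u-h|\,{\rm d}x,\quad \int_{B_R}|f||D(u-h)|\,{\rm d}x
\]
and estimates each by H\"older (with exponents $p'+\varepsilon$, then $p$) directly in $B_R$, you instead apply H\"older once on $\Gamma_R$ with exponents $s=p'+\varepsilon$ and $s'=(p+\varepsilon(p-1))/(1+\varepsilon(p-1))$, bound $\|f\|_{L^s(\Gamma_R)}$ by the Sobolev trace of $f\in W^{1,s}(\Omega)$ (using $s<d$), and handle the remaining factor $\|u-h\|_{L^{s'}(\Gamma_R)}$ via the scale-invariant trace inequality for $W^{1,p}_0(B_R)$. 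This gives a single term and, because the H\"older step occurs on a $(d-1)$-dimensional set rather than a $d$-dimensional one, yields a slightly sharper comparison exponent: you obtain $\beta=(d-1+\alpha)+1/(p+\varepsilon(p-1))$, strictly larger than the $d-1+\alpha$ that is actually needed. The concluding step -- Campanato decay in $L^1$ at rate $d-1+\alpha$, upgraded to Morrey growth of $Du$, then the Dirichlet growth theorem -- is also cleaner than the paper's, which passes through an $L^p$-Campanato bound (the step \eqref{doron}) whose derivation from the $L^1$ decay is left terse, and whose embedding constant $\alpha=1-\lambda/p$ is stated under a convention that needs care. Your route therefore buys a more transparent bookkeeping and avoids that embedding subtlety.

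One small inaccuracy: you assert that $\beta>d-1+\alpha$ holds \emph{precisely} under $\varepsilon>(d-p)/(p-1)$. In fact, since $\beta=(d-1+\alpha)+1/(p+\varepsilon(p-1))$ and the last summand is positive, the strict inequality $\beta>d-1+\alpha$ holds for \emph{every} admissible $\varepsilon$. The lower bound $\varepsilon>(d-p)/(p-1)$ is instead exactly what guarantees $\alpha=1-d/(p+\varepsilon(p-1))>0$, which is needed so that the target exponent $\sigma=d-1+\alpha$ exceeds $d-1$ and the Morrey embedding actually delivers a positive H\"older modulus. Apart from this misattribution of where the constraint enters, the argument is sound.
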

\begin{proof}We split the proof into three steps. 

\medskip

\noindent{\bf Step 1 - }Combining \eqref{stima145} and \eqref{stima147}, one obtains
				\begin{equation}\label{stima155}
					\| D (u-h)\|_{L^p(B_R)}^p\le C\int_{\Gamma}|f(u-h)|\,{\rm d}\mathcal{H}^{d-1}.
				\end{equation}
				We proceed by examining the right-hand side of \eqref{stima155}. Using the Trace Theorem, we get
				\begin{align}\label{stima156}
					\int_{\Gamma_R}|f(u-h)|\,{\rm d}\mathcal{H}^{d-1}\le&C\int_{B_{R}}|f(u-h)|\,{\rm d}x+C\int_{B_{R}}| D \big(f(u-h)\big)|\,{\rm d}x \notag\\
					\le&C\int_{B_{R}}|f||u-h|\,{\rm d}x+C\int_{B_{R}}| D  f||u-h|\,{\rm d}x \notag \\
					&+C\int_{B_{R}}|f|| D (u-h)|\,{\rm d}x \notag\\
					=:&I_1+I_2+I_3.
				\end{align}
				Now, we estimate each of the summands $I_1$, $I_2$ and $I_3$. Concerning $I_1$, we have
				\begin{align}\label{stima157}
					\int_{B_{R}}|f||u-h|\,{\rm d}x\le&\bigg(\int_{B_{R}}|f|^{p'+\varepsilon}\,{\rm d}x\bigg)^{\frac{1}{p'+\varepsilon}}\bigg(\int_{B_{R}}|u-h|^{\frac{p'+\varepsilon}{p'+\varepsilon-1}}\,{\rm d}x\bigg)^{\frac{p'+\varepsilon-1}{p'+\varepsilon}} \notag\\
					\le&C\Bigg[\bigg(\int_{B_R}|u-h|^{\frac{p'+\varepsilon}{p'+\varepsilon-1}\frac{p'+\varepsilon-1}{p'+\varepsilon}p}\,{\rm d}x\bigg)^{\frac{p'+\varepsilon}{p'+\varepsilon-1}\frac{1}{p}} \notag\\
					&\times\bigg(\int_{B_{R}}\,{\rm d}x\bigg)^{\frac{\frac{p'+\varepsilon-1}{p'+\varepsilon}p-1}{\frac{p'+\varepsilon-1}{p'+\varepsilon}p}}\Bigg]^{\frac{p'+\varepsilon-1}{p'+\varepsilon}} \notag\\
					\le&CR^{d\frac{\frac{p'+\varepsilon-1}{p'+\varepsilon}p-1}{p}}\|u-h\|_{L^p(B_R)} \notag\\
					\le&CR^{d\frac{\frac{p'+\varepsilon-1}{p'+\varepsilon}p-1}{p}}\| D (u-h)\|_{L^p(B_R)}.
				\end{align}
				To estimate $I_2$, one notices that
				\begin{align}\label{stima158}
					\int_{B_{R}}| D  f||u-h|\,{\rm d}x\le&\bigg(\int_{B_{R}}| D  f|^{p'+\varepsilon}\,{\rm d}x\bigg)^{\frac{1}{p'+\varepsilon}}\bigg(\int_{B_{R}}|u-h|^{\frac{p'+\varepsilon}{p'+\varepsilon-1}}\,{\rm d}x\bigg)^{\frac{p'+\varepsilon-1}{p'+\varepsilon}} \notag\\
					\le&C\Bigg[\bigg(\int_{B_R}|u-h|^{\frac{p'+\varepsilon}{p'+\varepsilon-1}\frac{p'+\varepsilon-1}{p'+\varepsilon}p}\,{\rm d}x\bigg)^{\frac{p'+\varepsilon}{p'+\varepsilon-1}\frac{1}{p}} \notag\\
					&\times\bigg(\int_{B_{R}}\,{\rm d}x\bigg)^{\frac{\frac{p'+\varepsilon-1}{p'+\varepsilon}p-1}{\frac{p'+\varepsilon-1}{p'+\varepsilon}p}}\Bigg]^{\frac{p'+\varepsilon-1}{p'+\varepsilon}} \notag\\
					\le&CR^{d\frac{\frac{p'+\varepsilon-1}{p'+\varepsilon}p-1}{p}}\|u-h\|_{L^p(B_R)} \notag\\
					\le&CR^{d\frac{\frac{p'+\varepsilon-1}{p'+\varepsilon}p-1}{p}}\| D (u-h)\|_{L^p(B_R)}.
				\end{align}
				Finally, we examine $I_3$. Indeed, 
				\begin{align}\label{stima159}
					\int_{B_{R}}|f|| D (u-h)|\,{\rm d}x\le&\bigg(\int_{B_{R}}|f|^{p'+\varepsilon}\,{\rm d}x\bigg)^{\frac{1}{p'+\varepsilon}}\\
					& \times\bigg(\int_{B_{R}}| D (u-h)|^{\frac{p'+\varepsilon}{p'+\varepsilon-1}}\,{\rm d}x\bigg)^{\frac{p'+\varepsilon-1}{p'+\varepsilon}} \notag\\
					\le&C\Bigg[\bigg(\int_{B_R}| D (u-h)|^{\frac{p'+\varepsilon}{p'+\varepsilon-1}\frac{p'+\varepsilon-1}{p'+\varepsilon}p}\,{\rm d}x\bigg)^{\frac{p'+\varepsilon}{p'+\varepsilon-1}\frac{1}{p}} \notag\\
					&\times\bigg(\int_{B_{R}}\,{\rm d}x\bigg)^{\frac{\frac{p'+\varepsilon-1}{p'+\varepsilon}p-1}{\frac{p'+\varepsilon-1}{p'+\varepsilon}p}}\Bigg]^{\frac{p'+\varepsilon-1}{p'+\varepsilon}} \notag\\
					\le&CR^{d\frac{\frac{p'+\varepsilon-1}{p'+\varepsilon}p-1}{p}}\| D (u-h)\|_{L^p(B_R)}.
				\end{align}
				Because of the role played by the exponents in the previous inequalities, we conclude this step by noticing that
				\begin{align*}
					\frac{\frac{p'+\varepsilon-1}{p'+\varepsilon}p-1}{p}
					=&\frac{\varepsilon(p-1)}{p(p'+\varepsilon)}.
				\end{align*}
				
\medskip

\noindent{\bf Step 2 -}Now we combine \eqref{stima155}, \eqref{stima156}, \eqref{stima157}, \eqref{stima158}, and \eqref{stima159} to produce
				\begin{equation*}
					\| D (u-h)\|_{L^p(B_R)}^{p-1}\le CR^{d\frac{\varepsilon(p-1)}{p(p'+\varepsilon)}}.
				\end{equation*}
				As a consequence, it follows that 
				\begin{equation}\label{stima160}
					\int_{B_{R}}| D (u-h)|^p\,{\rm d}x\le CR^{d\frac{\varepsilon}{p'+\varepsilon}}.
				\end{equation}
				Hence, Proposition \ref{stima161} builds upon \eqref{stima160} to yield
				\begin{equation*}
					\int_{B_{r}}| D  u-( D  u)_r|\,{\rm d}x\le C\Big(\frac{r}{R}\Big)^{d+\alpha}\int_{B_{R}}| D  u-( D  u)_R|\,{\rm d}x+CR^{d\frac{\varepsilon}{p'+\varepsilon}}.
				\end{equation*}
				The former inequality, together with Lemma \ref{lem_stima119}, leads to
				\begin{equation*}
					\int_{B_{r}}| D  u-( D  u)_r|\,{\rm d}x\le Cr^{d\frac{\varepsilon}{p'+\varepsilon}}\quad\forall r\in(0,R],
				\end{equation*}
				and one easily concludes
				\begin{equation}\label{doron}
					r^{(d-d\frac{\varepsilon}{p'+\varepsilon})-d}\int_{B_{r}}| D  u-( D  u)_r|^p\,{\rm d}x\le C\quad\forall r\in(0,R].
				\end{equation}
\medskip

\noindent{\bf Step 3 - }The inequality in \eqref{doron} implies  $Du\in L_C^{p,\lambda}(\Omega;\mathbb{R}^d)$, with
				\begin{equation*}
					\lambda:=d\bigg(1-\frac{\varepsilon}{p'+\varepsilon}\bigg)=\frac{dp}{p+p\varepsilon-\varepsilon}.
				\end{equation*}
				Since $\lambda<d$, we have $L_C^{p,\lambda}(\Omega;\mathbb{R}^d)=L_M^{p,\lambda}(\Omega;\mathbb{R}^d)$; as a consequence $u\in C_{\textnormal{loc}}^{0,\alpha}(\Omega)$ with
				\begin{equation*}
					\alpha=1-\frac{\lambda}{p}
				\end{equation*}
				if $p>\lambda$. That is, if
				\begin{equation*}
					\varepsilon>\frac{d-p}{p-1},
				\end{equation*}
				which holds by assumption.
				Since $p'+\varepsilon<d$, we finally get
				\begin{equation*}
					\frac{d-p}{p-1}<\varepsilon<d-\frac{p}{p-1}.
				\end{equation*}
\end{proof}

\begin{Remark}[Endpoint-regularity]We conclude by examining the limit behaviour of the modulus of continuity -- encoded by the H\"older exponent $\alpha\in(0,1)$ in Theorem \ref{thm_c0alpha} -- as $\varepsilon$ approaches the endpoints of its interval of definition. Indeed, as
\[
	\varepsilon\to\bigg(d-\frac{p}{p-1}\bigg)^-
\]
one gets	
\[
	\alpha\to1-\frac{1}{p-1}.
\]
On the other hand, as
\[
	\varepsilon\to\bigg(\frac{d-p}{p-1}\bigg)^+
\]
one has
\[
	\alpha\to0.
\]
\end{Remark}

\bigskip

{\small \noindent{\bf Acknowledgments.} The authors thank Paolo Baroni and Giuseppe Mingione for insightful comments on the material in the paper. VB is supported by the Centre for Mathematics of the University of Coimbra (UIDB/00324/2020, funded by the Portuguese Government through FCT/MCTES). EP is partially supported by the Centre for Mathematics of the University of Coimbra (UIDB/00324/2020, funded by the Portuguese Government through FCT/MCTES) and by FAPERJ (grants E26/200.002/2018 and E26/201.390/2021). JMU is partially supported by the King Abdullah University of Science and Technology (KAUST) and by the Centre for Mathematics of the University of Coimbra (UIDB/00324/2020, funded by the Portuguese Government through FCT/MCTES).}

\bigskip

\end{document}